\documentclass[a4paper]{amsart}
\usepackage[mathscr]{eucal}
\usepackage{amssymb}
\usepackage{latexsym}
\usepackage{enumerate}

\theoremstyle{plain}
\newtheorem{thm}{Theorem}[section]
\newtheorem{prop}[thm]{Proposition}
\newtheorem{lem}[thm]{Lemma}
\newtheorem{cor}[thm]{Corollary}

\theoremstyle{definition}
\newtheorem{defn}[thm]{Definition}
\newtheorem{rem}[thm]{Remark}

\pagestyle{headings}

\begin{document}

\title[Universal lattices]{Fixed point property for universal lattice on Schatten classes}
\author{Masato MIMURA}
\address{Graduate School of Mathematical Sciences, 
University of Tokyo, Komaba, Tokyo, 153-8914, Japan/ \'{E}cole Polytechnique f\'{e}d\'{e}rale de Lausanne, SB--IMB--EGG, 
Station 8, B\^{a}timent MA, Lausanne, Vaud, CH-1015, Switzerland}
\email{mimurac@ms.u-tokyo.ac.jp}
\thanks{The author is 
supported by JSPS Research Fellowships for Young Scientists No.20-8313.}

\keywords{fixed point property; Kazhdan's property (T); Schatten class operators; noncommutative $L^p$-spaces; bounded cohomology}

\begin{abstract}
The special linear group $G=\mathrm{SL}_n (\mathbb{Z}[x_1, \ldots , x_k])$ ($n$ at least $3$ and $k$ finite) is called the universal lattice. Let $n$ be at least $4$, $p$ be any real number in $(1, \infty)$. The main result is the following: any finite index subgroup of $G$ has the fixed point property with respect to every affine isometric action on the space of $p$-Schatten class operators. It is in addition shown that higher rank lattices have the same property. These results are generalization of previous theorems repsectively of the author and of Bader--Furman--Gelander--Monod, which treated commutative $L^p$-setting.
\end{abstract}

\maketitle

\section{Introduction and main results}\label{sec:intro}

In this paper, \textit{every group is assumed to locally compact and second countable, and every subgroup of a} (\textit{topological}) \textit{group is assumed to be closed}. The symbol $k$ is used for representing any finite natural (positive) number, and we set $A=\mathbb{Z}[x_1,\ldots ,x_k]$. Hereafter, by ``\textit{higher rank lattices}" we mean lattices (, namely, discrete subgroup with finite covolume) in semisimple algebraic groups over local fields (possibly archimedean) with each factor having local rank$\geq2$. We use the symbol $\langle \cdot \mid \cdot \rangle$ for the inner product on a Hilbert space.

The special linear group $G=\mathrm{SL}_n (A)$$=\mathrm{SL}_n (\mathbb{Z}[x_1, \ldots , x_k])$ (where $n\geq 3$) is called the \textit{universal lattice} by Y. Shalom in \cite{Shal1}. It was a long standing problem to determine whether this group satisfies \textit{Kazhdan's property} $(T)$ (\cite{Ka}), and finally Shalom and L. Vaserstein  has answered this problem in the affirmative. 
\begin{thm}$($Shalom \cite{Shal4}, Vaserstein \cite{Vas}$)$\label{thm:SV}
Universal lattices $G=\mathrm{SL}_n (\mathbb{Z}[x_1, \ldots , x_k])$ $(n\geq 3)$ have property $(\mathrm{T})$.
\end{thm}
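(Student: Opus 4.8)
The plan is to deduce property $(\mathrm{T})$ for $G=\mathrm{SL}_n(A)$, where $A=\mathbb{Z}[x_1,\ldots,x_k]$ and $n\geq 3$, from a \emph{relative} property $(\mathrm{T})$ statement for abelian subgroups together with a combination argument exploiting the $\mathrm{SL}_3$-structure; this is the Shalom--Vaserstein strategy. As a preliminary algebraic step I would record that $G$ is generated by its elementary root subgroups $U_{ij}=\{e_{ij}(a):a\in A\}\cong(A,+)$: by Suslin's results on elementary generation over polynomial rings, $\mathrm{SL}_n(A)=\mathrm{EL}_n(A)$ for $n\geq 3$ (here $\dim\mathbb{Z}=1$ produces exactly the bound $n\geq 3$). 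Hence it suffices to understand how a unitary representation with almost invariant vectors behaves on these abelian subgroups.

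The core of the argument, and the step I expect to be the main obstacle, is relative property $(\mathrm{T})$ for the pair $(\mathrm{SL}_2(A)\ltimes A^2,\ A^2)$, where $\mathrm{SL}_2(A)$ acts on $A^2$ by left multiplication. By the dual characterisation of relative property $(\mathrm{T})$ for such semidirect products, this is equivalent to the absence of an $\mathrm{SL}_2(A)$-invariant mean on the Borel subsets of $\widehat{A^2}\setminus\{1\}$, the Pontryagin dual of $A^2$ with the trivial character deleted. I would attack this via spectral measures: restricting a representation with almost invariant vectors to $A^2$ yields a sequence of probability measures on the compact group $\widehat{A^2}$ that are asymptotically invariant under the dual $\mathrm{SL}_2(A)$-action, and the task is to force all their mass to the trivial character. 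The decisive feature is that $A$ is finitely generated \emph{as a ring}, by $x_1,\ldots,x_k$: this permits a finite Kazhdan set, reducing the dual dynamics to the finitely many transvections $e_{12}(x_i), e_{21}(x_i), e_{12}(1), e_{21}(1)$, and the mixing produced by polynomial multiplication in $A$ is what rules out a nontrivial asymptotically invariant measure. The contrast with the classical ring $A=\mathbb{Z}$ is exactly that $\widehat{A^2}$ is here an infinite-dimensional compact torus rather than $\mathbb{T}^2$, which is the source of the difficulty.

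With this in hand I would transport it into $G$. For $n\geq 3$ the block matrices $\left(\begin{smallmatrix} B & v \\ 0 & 1\end{smallmatrix}\right)$ with $B\in\mathrm{SL}_2(A)$ and $v\in A^2$ form a copy of $\mathrm{SL}_2(A)\ltimes A^2$ inside $\mathrm{SL}_3(A)\leq G$, in which the $A^2$ factor is $\langle U_{13},U_{23}\rangle$. Since relative property $(\mathrm{T})$ passes to an overgroup, $(G,\langle U_{13},U_{23}\rangle)$ has relative property $(\mathrm{T})$; permuting and transposing indices then yields finitely many ``two-root-subgroup'' blocks $N_1,\ldots,N_m$, each forming a relative-$(\mathrm{T})$ pair with $G$, whose union generates $G$.

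Finally I would upgrade these relative statements to genuine property $(\mathrm{T})$. In its quantitative form, relative property $(\mathrm{T})$ says that an almost-invariant unit vector must lie near the subspace of $N_l$-fixed vectors for each $l$; Shalom's combination argument, which uses the Steinberg commutation relations among root subgroups special to $\mathrm{SL}_n$ with $n\geq 3$ (the $\mathrm{SL}_2$ case admits no such propagation and indeed fails), then forces the vector near the $G$-fixed subspace, giving a nonzero invariant vector. The ring-theoretic input ensuring this combination is effective over the polynomial ring $A$ for every $n\geq 3$ — a suitable bounded-reduction/generation statement for $\mathrm{SL}_n(A)$ by the root subgroups — is Vaserstein's contribution. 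Together with the relative-$(\mathrm{T})$ step over the infinite-dimensional dual, this combination is where essentially all the work lies.
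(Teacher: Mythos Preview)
The paper does not contain a proof of this statement: Theorem~\ref{thm:SV} is quoted as a result of Shalom and Vaserstein and used as a black box (for instance in the proof of Theorem~\ref{thm:criter}, where property $(\mathrm{T})$ for $\mathrm{SL}_{n\geq 3}(A)$ is invoked to supply property $(\mathrm{T}_{\mathcal{B}})$ under condition $(ii)$). There is therefore nothing in the paper to compare your argument against.

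That said, your outline is a fair summary of the Shalom--Vaserstein strategy and is consistent with the ingredients the paper does cite: Suslin's theorem $\mathrm{SL}_n(A)=\mathrm{E}_n(A)$ for $n\geq 3$; relative property $(\mathrm{T})$ for the pair $\mathrm{E}_2(A)\ltimes A^2\trianglerighteq A^2$ due to Shalom (note the paper works with $\mathrm{E}_2(A)$ rather than $\mathrm{SL}_2(A)$, which genuinely differ over $A$ by Cohn's result); and Vaserstein's bounded reduction over polynomial rings, which supplies the bounded generation input for the combination step. One small correction: relative $(\mathrm{T})$ for that semidirect pair is already established in \cite{Shal1} and predates the full theorem, so you need not present it as the outstanding obstacle --- the decisive new input specific to Theorem~\ref{thm:SV} is Vaserstein's bounded generation result.
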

Because property (T) passes to group quotients, this result immediately implies groups such as $\mathrm{SL}_{n\geq 3}(\mathbb{Z}[1/p])$ (here $p$ is a prime number); $\mathrm{SL}_{n\geq 3}(\mathbb{Z}[\sqrt{2},\sqrt{3}])$; $\mathrm{SL}_{n\geq 3}(\mathbb{F}_{q}[x])$ ($\mathbb{F}_{q}$ is the field of order $q$ and $q$ is a prime power); and $\mathrm{SL}_{n\geq 3}(\mathbb{Z}[t, t^{-1}])$ have property (T). Note that in four examples above, all but last one are higher rank (hence arithmetic (or $S$-arithmetic)) lattices. Kazhdan's original result \cite{Ka} states that higher rank lattices (recall the notation in this paper from the first paragraph) has property (T). Hence in these cases property (T) is classical. However the last one in the examples above cannot be realized as a higher rank (hence arithmetic) lattice because it contains an infinite normal subgroup of infinite index, and property (T) for this group had not been obtained before the Shalom--Vaserstein theorem. Therefore property (T) for universal lattices can be regarded as a \textit{non-arithmetization} of extreme rigidity for higher rank lattices. 
Recall that the celebrated Delorme--Guichardet theorem, see \cite{BHV}, states property (T) is equivalent to  \textit{property }$(FH)$, which is defined as follows: a group $\Lambda$ is said to have \textit{property} $(FH)$ if every (continuous) affine isometric action of $\Lambda$ on a Hilbert space has a global fixed point.

From this point, \textit{we always assume }$p\in (1, \infty)$. In 2007, Bader, Furman, Gelander, and Monod considered fixed point properties in much wider framework, and they defined the fixed point property, \textit{property} $(F_{\mathcal{B}})$, for a family $\mathcal{B}$ of Banach spaces (, or a single Banach space $B$). They in particular paid heavy attention to the case of $\mathcal{B}=\mathcal{L}^p$, which denotes the family of all $L^p$-spaces, because property $(\mathrm{F}_{\mathcal{L}^p})$ turns out to be much stronger than property $(\mathrm{T})$ ($\Leftrightarrow$ property $(\mathrm{FH})=(\mathrm{F}_{\mathcal{L}^2})$), provided that $p\gg 2$. Indeed, P. Pansu \cite{Pa} shown the group $\mathrm{Sp}_{n,1}$, which has property (T) if $n\geq 2$, fails to have property $(\mathrm{F}_{\mathcal{L}^p})$ if $p>4n+2$. Moreover G. Yu \cite{Yu} shown that every hyperbolic group $H$, including wide range of groups with property (T), has $p>2$ such that it admits a metrically proper (hence far from having a global fixed point) affine isometric action on $l^p$-space. In comparison, one of the main results of Bader--Furman--Gelander--Monod in \cite{BFGM} is the following:
\begin{thm}$($\cite{BFGM}$)$\label{thm:BFGM}
Higher rank lattices, in the notation of this paper, remain to have property $(\mathrm{F}_{\mathcal{L}^p})$ for all $p \in (1,\infty )$.
\end{thm}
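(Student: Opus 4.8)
The plan is to deduce property $(\mathrm{F}_{\mathcal{L}^p})$ for a higher rank lattice $\Gamma$ from the corresponding property for its ambient semisimple group, and then to establish it for that group. The first reduction rests on \emph{induction of affine isometric actions}: given an affine isometric action of $\Gamma$ on an $L^p$-space $X$, one forms the induced action of the ambient group on $L^p(G/\Gamma , X)$. Because $\Gamma$ has finite covolume and because an $L^p$-space of $L^p$-valued functions is again an $L^p$-space (an isometric identification $L^p(G/\Gamma ; L^p(\mu))\cong L^p(G/\Gamma\times\mu)$), this induced space lies in $\mathcal{L}^p$, and a $G$-fixed point restricts to a $\Gamma$-fixed point; so it suffices to treat the ambient group. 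Writing it as an almost direct product of its simple factors, each of local rank $\geq 2$, and noting that a bounded $1$-cocycle on each factor assembles into a bounded $1$-cocycle on the product (since the linear part is isometric, $\|c(g_1g_2)\|\leq\|c(g_1)\|+\|c(g_2)\|$), I would reduce to the case of a simple algebraic group $\mathbf{G}(F)$ of $F$-rank at least $2$ over a local field $F$.

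Next I would exploit uniform convexity. Since $L^p$ with $1<p<\infty$ is uniformly convex (Clarkson), every bounded orbit has a unique circumcenter, which is automatically fixed; thus for such spaces property $(\mathrm{F}_{\mathcal{L}^p})$ is \emph{equivalent} to every affine isometric action having bounded orbits, i.e.\ to every $1$-cocycle $c\colon G\to B$ for the linear part $\pi$ being bounded. Using the mean ergodic theorem in the uniformly convex setting, I would split $B=B^{\pi(G)}\oplus B_0$ into the invariant vectors and a canonical complement carrying no nonzero $\pi$-invariant vector. On $B^{\pi(G)}$ the cocycle is a continuous homomorphism into an abelian Banach group, hence factors through the abelianization; since $\mathbf{G}(F)$ is perfect (and higher rank lattices have finite abelianization by property (T)), this component vanishes. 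Everything therefore comes down to showing that $c$ is bounded on the part $B_0$ without invariant vectors.

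The core, and the main obstacle, is precisely this boundedness on $B_0$, where property (T) in the Hilbert sense does \emph{not} transfer to $L^p$ and a genuinely Banach-space argument is needed. The engine I would use is a \emph{relative fixed point property}: inside $\mathbf{G}(F)$ sits a copy of $\mathrm{SL}_2(F)\ltimes F^2$, arising from a root $\mathrm{SL}_2$ acting on a root subgroup, and the pair $(\mathrm{SL}_2(F)\ltimes F^2, F^2)$ should have the property that, for every affine isometric action on a uniformly convex space, the abelian subgroup $F^2$ admits a fixed point. The Hilbert prototype is the relative property (T) of $(\mathrm{SL}_2(F)\ltimes F^2, F^2)$, whose proof uses that the dual $\mathrm{SL}_2$-action on $\widehat{F^2}\cong F^2$ carries no invariant probability measure away from the origin; I would seek the uniformly convex analogue of this spectral-gap statement, relying on uniform convexity to perform the required averaging and decomposition. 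Finally, since $\mathbf{G}(F)$ is generated by such unipotent root subgroups, I would propagate fixedness of the individual $F^2$'s to a single global fixed point through a Mautner-type phenomenon combined with bounded generation by root subgroups. Establishing the Banach-space relative property and this propagation step is where essentially all the difficulty lies.
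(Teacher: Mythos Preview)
Your overall architecture---pass from the lattice to the ambient group by $p$-induction, establish $(\mathrm{F}_{\mathcal{L}^p})$ for the ambient higher rank group via a relative property for pairs of the type $\mathrm{SL}_2(F)\ltimes F^2\trianglerighteq F^2$, and then propagate using a Mautner/Howe--Moore mechanism---is indeed the skeleton of the Bader--Furman--Gelander--Monod proof as the present paper describes it (see \S\ref{sec:Cp}, \S\ref{sec:fi}, and the proof of Theorem~\ref{thm:highrank}). Two points, however, separate your sketch from the actual argument.

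First, and most importantly, the ``engine'' is not what you propose. You write that you would ``seek the uniformly convex analogue of this spectral-gap statement, relying on uniform convexity to perform the required averaging.'' That is not how BFGM obtain relative $(\mathrm{T}_{\mathcal{L}^p})$; they do \emph{not} rerun the measure-on-the-dual argument in a Banach setting. Instead they classify linear isometries of $L^p(\mu)$ via the Banach--Lamperti theorem (Tool~2 in \S\ref{sec:Cp}) and use the Mazur map $M_{p,2}\colon S(L^p)\to S(L^2)$ (Tool~1), which is a uniformly continuous homeomorphism of unit spheres, to conjugate the given isometric $L^p$-representation to a genuine \emph{unitary} representation on $L^2$. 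Almost invariant vectors transfer through $M_{p,2}$ by uniform continuity, so relative $(\mathrm{T})$ in the Hilbert sense yields relative $(\mathrm{T}_{L^p})$ directly. Your proposed direct ``uniformly convex spectral gap'' is not known to work in this generality; the Mazur-map transfer is the actual mechanism, and it is precisely this mechanism that the present paper generalizes (via Arazy's and Yeadon's isometry theorems) to the Schatten and noncommutative $L^p$ settings.

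Second, your induction step glosses over a genuine difficulty. For a non-cocompact lattice $\Gamma\leqslant G$ the induced cocycle $\tilde b(g)(d)=b(\beta(g,d))$ need not lie in $L^p(\mathcal{D},B)$: one must check $p$-integrability over the (noncompact) fundamental domain. As recorded in Remark~\ref{rem:int}(2), this is nontrivial and uses the Lubotzky--Mozes--Raghunathan theorem on word versus Riemannian metrics to control $\|b(\beta(g,d))\|$. Simply citing ``finite covolume'' is not enough.

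Finally, a minor point: the propagation step in BFGM is phrased via their \emph{generalized Howe--Moore property} (\cite[\S 9]{BFGM}) rather than bounded generation by root subgroups; these are cousins, but the Howe--Moore formulation is what actually closes the argument in \cite[\S 5]{BFGM}.
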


 In \cite{Mim}, the author extended the Shalom--Vaserstein theorem and obtained the following ``non-arithmetization" of Theorem~\ref{thm:BFGM}:
\begin{thm}$($\cite{Mim}$)$\label{thm:prev}
Let $n\geq 4$. Then for any $p\in (1,\infty)$ the universal lattice $G=\mathrm{SL}_n (\mathbb{Z}[x_1, \ldots , x_k])$ has property $(\mathrm{F}_{\mathcal{L}^p})$.
\end{thm}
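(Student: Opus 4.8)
The plan is to reduce property $(\mathrm{F}_{\mathcal{L}^p})$ for $G=\mathrm{SL}_n(A)$ to the vanishing of reduced first cohomology and then to exploit the ring-theoretic structure of $A$, upgrading the Shalom--Vaserstein proof of Theorem~\ref{thm:SV} in the same spirit that Theorem~\ref{thm:BFGM} upgrades Kazhdan's theorem. First I would fix an affine isometric action of $G$ on an $L^p$-space $B$, with linear part $\pi$ and cocycle $c$, and invoke the uniform convexity of $L^p$ for $p\in(1,\infty)$: by the standard circumcenter argument it suffices to show that every orbit of the action is bounded. Using the canonical BFGM decomposition $B=B^{\pi(G)}\oplus B'$, the contribution of the invariant part $B^{\pi(G)}$ is governed by $\mathrm{Hom}(G,\mathbb{R})$, which vanishes since the Steinberg relations exhibit the elementary generators as commutators, so $G$ is perfect. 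Thus the problem is reduced to showing $\bar H^1(G,\pi)=0$ when $\pi$ has no nonzero invariant vectors.

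The core step is an $L^p$-analogue of the relative property $(\mathrm{T})$ lying at the heart of Theorem~\ref{thm:SV}: for the semidirect product $L=\mathrm{SL}_2(A)\ltimes A^2$ sitting as a parabolic subgroup of $\mathrm{SL}_3(A)\le G$, the pair $(L,A^2)$ should have relative property $(\mathrm{F}_{\mathcal{L}^p})$, i.e. the restriction of any such $c$ to the abelian unipotent $A^2$ is bounded. To prove this I would analyze the isometric $L^p$-representation of the abelian group $A^2$ through the BFGM description of cocycles on abelian groups, together with the $\mathrm{SL}_2(A)$-action on the character group $\widehat{A^2}$; the point is that, by the ring-theoretic input behind Theorem~\ref{thm:SV}, this dual action admits no invariant mean concentrated near the trivial character, and this ``spectral gap at the trivial character'' forces the $A^2$-cocycle to stay bounded. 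I expect this relative statement to be the main obstacle, since the $L^2$ spectral theory available to Shalom is unavailable for $p\neq 2$ and must be replaced by the genuinely Banach-space machinery of \cite{BFGM}.

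Granting the relative property, every single root subgroup $E_{ij}(A)\cong A$ embeds into such an $A^2$ and therefore has bounded $c$-orbit. To propagate boundedness to all of $G$ I would use the commutation relations $[E_{ij}(a),E_{jk}(b)]=E_{ik}(ab)$ together with pairs of commuting copies of $\mathrm{SL}_2(A)$; here the hypothesis $n\geq 4$ enters, providing the room to house two disjoint $\mathrm{SL}_2$-blocks and run the averaging that transfers invariance across all root subgroups. Finally, Vaserstein's theorem that $\mathrm{SL}_n(A)$ is boundedly generated by the finitely many subgroups $E_{ij}(A)$, combined with the cocycle inequality $\|c(gh)\|\le\|c(g)\|+\|c(h)\|$ (valid since $\pi$ is isometric), yields a uniform bound on $\|c\|$ over all of $G$. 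Bounded orbits in the uniformly convex space $B$ then produce a global fixed point, establishing $\bar H^1(G,\pi)=0$ and hence property $(\mathrm{F}_{\mathcal{L}^p})$.
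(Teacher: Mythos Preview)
Your overall architecture---relative fixed point property for the unipotent radical, then Vaserstein's bounded generation and the cocycle inequality---is indeed the skeleton of the argument in \cite{Mim} (and of Theorem~\ref{thm:criter} here). But the heart of the matter, the relative step, is not established by the mechanism you describe, and as written it does not go through.

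You propose to obtain relative $(\mathrm{F}_{\mathcal{L}^p})$ for $(\mathrm{SL}_2(A)\ltimes A^2,\,A^2)$ by analyzing the $\mathrm{SL}_2(A)$-action on $\widehat{A^2}$ and invoking a ``spectral gap at the trivial character.'' That argument lives entirely in the Hilbert world: it uses the SNAG/Stone spectral theorem to decompose a \emph{unitary} representation of the abelian group $A^2$ over its Pontryagin dual. For isometric representations on $L^p$ with $p\neq 2$ there is no such projection-valued measure, so the dual-action/mean argument simply does not apply. What actually replaces it is precisely the opposite of what you expect: BFGM's machinery does not stay in the Banach setting but \emph{returns} to $L^2$. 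One uses the Banach--Lamperti classification of $L^p$-isometries together with the Mazur map $M_{p,2}$ to conjugate any isometric $L^p$-representation to a genuine unitary representation on $L^2$, thereby reducing relative $(\mathrm{T}_{\mathcal{L}^p})$ to Shalom's relative $(\mathrm{T})$. This is exactly ``Tool~1/Tool~2'' in Section~\ref{sec:Cp} of the present paper. A secondary point: the relevant pair is $\mathrm{E}_2(A)\ltimes A^2\trianglerighteq A^2$, and by Cohn's theorem $\mathrm{E}_2(A)\subsetneq\mathrm{SL}_2(A)$ for $A=\mathbb{Z}[x_1,\dots,x_k]$.

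There is a second, related gap: even once you have relative $(\mathrm{T}_{\mathcal{L}^p})$ for the rank-$2$ pair, you still need to pass to relative $(\mathrm{F}_{\mathcal{L}^p})$, and for general superreflexive $B$ this is not automatic. In \cite{Mim} (Theorem~\ref{thm:res1} here) this upgrade is carried out for the pair $\mathrm{SL}_m(A)\ltimes A^m\geqslant A^m$ \emph{only for} $m\geq 3$; this is the true source of the restriction $n\geq 4$ (one needs $n-1\geq 3$), not the ``two disjoint $\mathrm{SL}_2$-blocks'' picture you sketch. After that, either Shalom's ultraproduct/scaling-limit machinery (Theorem~\ref{thm:res2}, using that $\mathcal{L}^p$ is stable under ultraproducts) or the direct route via $(\mathrm{T}_{\mathcal{L}^p})$ (Proposition~\ref{prop:res3}) finishes the job. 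Your invocation of $\bar H^1$ hints at the first route but is never actually used in your argument; if you truly had boundedness of $c$ on every root subgroup, bounded generation would already give $H^1=0$ for all $n\geq 3$, which is not what is known.
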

Note that property $(\mathrm{F}_{\mathcal{L}^p})$ is of high importance also from the view point of group actions on the circle, see \cite[\S 4]{Nav}. Also we
mention in Theorem~\ref{thm:prev}, the case of $n=3$ seems to remain open.

The main result of this paper is to extend this result to that for the family $\mathcal{C}_p$ of the spaces $C_p$ of $p$-Schatten class operators on any separable Hilbert space. Here for a separable Hilbert space $\mathfrak{H}$, a bounded linear operator $ a \in \mathbb{B}(\mathfrak{H})$ is said to be of $p$\textit{-Schatten} class ($a\in C_p$) if $\mathrm{Tr}(|a|^p)<\infty$ holds, where $\mathrm{Tr}$ is the canonical (semifinite) trace: take an orthonomal basis $(\xi_i)_{i\in \mathbb{N}}$ for $\mathfrak{H}$ and for $t\in \mathbb{B}(\mathfrak{H})^+$, set
$$
\mathrm{Tr}(t):= \sum_{i\in \mathbb{N}}\langle t\xi_i \mid \xi_i \rangle \in [0,\infty]
$$
(, which is independently determined of the choice of an orthonormal basis of $\mathfrak{H}$); and $|a|:=(a^{*}a)^{1/2}$ is the absolute value of $a$. It can be regarded as a some generalization of Theorem~\ref{thm:prev} to noncommutative $L^p$-setting. Motivating fact on this study is that fixed point property on $\mathcal{C}_p$ has potential for some application to group actions on higher dimensional manifolds.  

Furthermore, in this paper we consider stronger property than property $(\mathrm{F}_B)$, called \textit{property }$(FF_B)$. Recall every affine isometric action $\alpha\colon G \curvearrowright B$ can be written as $\alpha (g)\cdot \xi$$=\rho (g)\xi +b(g)$, where $\rho$ is a isometric linear representation and $b\colon G\to B$ is a $\rho$\textit{-}($1$\textit{-})\textit{cocycle}, namely, for any $g,h\in G$, $b(gh)=b(g)+\rho(g)b(h)$ holds. 
We consider a ``quasification" of cocycles, namely we allow uniformly bounded error from being cocycles. Property $(\mathrm{FF}_B)$ is the boundedness property for any quasi-cocycle into every isometric representation in $B$. Property $(\mathrm{FF}_B)/\mathrm{T}$ is a weaker form of $(\mathrm{FF}_B)$ and that asserts the boundedness of quasi-cocycles \textit{modulo trivial linear part}. Now we state our main result:

\begin{thm}\label{thm:Cp}
Let $n\geq 4$. Then for any $p\in (1,\infty)$ the universal lattice $G=\mathrm{SL}_n(\mathbb{Z}[x_1, \ldots , x_k])$ has property $(\mathrm{F}_{\mathcal{C}_p})$. Equivalently, every affine isometric action of $G$ on the space  $C_p$ of $p$-Schatten class operators $($on any separable Hilbert space$)$ has a global fixed point. 
Furthermore, for any $p\in (1,\infty)$, $G$ has property $(\mathrm{FF}_{\mathcal{C}_p})/ \mathrm{T}$ $($``property $(\mathrm{FF}_{\mathcal{C}_p})$ modulo trivial part"$)$. In particular, for any isometric linear representation $\rho$ on $C_p$ $\mathrm{which}$ $\mathrm{satisfies}$ $\rho \not\supseteq 1_G$, every quasi-$\rho$-cocycle is bounded. 

Both property $(\mathrm{F}_{\mathcal{C}_p})$ and property $(\mathrm{FF}_{\mathcal{C}_p})/\mathrm{T}$ remain valid by taking group quotients and taking finite index subgroups of $G$ above. 
\end{thm}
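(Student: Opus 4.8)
The plan is to follow the three-layer architecture of \cite{BFGM} and of the commutative predecessor (Theorem~\ref{thm:prev}), replacing the measure-theoretic inputs by operator-algebraic ones. First I record the analytic reduction. For $p\in(1,\infty)$ the space $C_p$ is uniformly convex, so an affine isometric action with a bounded orbit has a unique circumcenter, which is necessarily fixed; hence property $(\mathrm{F}_{\mathcal{C}_p})$ amounts to the boundedness of every $\rho$-cocycle $b\colon G\to C_p$, and property $(\mathrm{FF}_{\mathcal{C}_p})/\mathrm{T}$ to the boundedness of every quasi-$\rho$-cocycle whenever $\rho\not\supseteq 1_G$. Since $C_p$ is reflexive, the Alaoglu--Birkhoff (mean ergodic) decomposition splits any isometric linear representation $G$-invariantly as $C_p=(C_p)^{\rho(G)}\oplus M$ with $M$ free of nonzero invariant vectors, and $b$ splits accordingly. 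On the invariant summand $b$ is a homomorphism into an abelian group, hence trivial because $G$ has property $(\mathrm{T})$ (Theorem~\ref{thm:SV}) and thus finite abelianization; all of the content is concentrated on $M$.

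The core is a relative statement for the abelian root subgroups. I fix the standard ``$(\mathrm{T})$-configuration'' $L=\mathrm{SL}_2(A)\ltimes A^2\hookrightarrow G$, namely a two-by-two $\mathrm{SL}_2$-block together with the adjacent root subgroup $U\cong A^2$ on which it acts, and aim to prove the relative fixed point property for $(L,U)$ on $C_p$: for every affine isometric $L$-action the cocycle restricted to $U$ is bounded. This is where noncommutativity bites and where the pointwise, measure-space arguments available in the commutative case break down. The substitute has two ingredients. First, the classification of surjective isometries of the noncommutative $L^p$-space $C_p$ (Arazy/Yeadon-type theorems) forces the linear part to be spatially implemented, up to a Jordan $*$-automorphism and a positive density operator commuting with the trace, which endows $M$ with an intrinsic noncommutative $L^p$-module structure carrying the $U$-action. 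Second, I would construct the analogue of the ``mean-zero'' decomposition of $M$ under $U\cong A^2$ directly from the tracial and modular data, in place of the spectral decomposition over the dual of the abelian group. Relative property $(\mathrm{T})$ for $(L,U)$, which follows from Theorem~\ref{thm:SV} in this configuration, then delivers boundedness of the $U$-cocycle through uniform convexity.

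With the relative statement in hand I propagate boundedness over all of $G$ by a Mautner phenomenon combined with bounded generation. Each elementary subgroup $E_{ij}(A)$ is conjugate in $G$ into the distinguished part $U$ of some copy of $L$, so the relative step bounds the cocycle on every root subgroup; the commutator calculus in $\mathrm{SL}_n(A)$ for $n\ge4$ underlying Theorem~\ref{thm:SV} then upgrades this to boundedness on the whole group, proving $(\mathrm{F}_{\mathcal{C}_p})$. For $(\mathrm{FF}_{\mathcal{C}_p})/\mathrm{T}$ I run the identical scheme for quasi-cocycles: the relative estimate is established for quasi-cocycles on $(L,U)$, and since the cocycle defect is uniformly bounded and the words realizing bounded generation have bounded length, the accumulated error remains bounded, which is the cohomological counterpart of the vanishing of the relevant second bounded cohomology exploited in the commutative treatment. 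Heredity is then formal: a representation of a quotient $Q$ pulls back to $G$ and a $G$-fixed point is a $Q$-fixed point; and for a finite-index subgroup $H\le G$, inducing an affine isometric $H$-action on $C_p(\mathfrak{H})$ yields a $G$-action on $\bigoplus_{G/H}C_p(\mathfrak{H})$, which is isometrically the Schatten class $C_p\big(\mathfrak{H}^{\oplus[G:H]}\big)$ realized by block-diagonal operators, so the fixed point obtained for $G$ restricts to one for $H$.

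I expect the genuine obstacle to be the relative step in the noncommutative setting. Without a measure-space model one cannot simply diagonalize the $U$-action and split off its invariants with a spectral gap; instead the Jordan-automorphism form of the isometries must be leveraged to manufacture this decomposition intrinsically from the trace, and the interaction of the modular/density data with the abelian $U$-action must be controlled and then transferred through the modulus of uniform convexity of $C_p$. This is precisely the point at which the argument departs from Theorem~\ref{thm:prev}, and where the new contribution of the paper should lie.
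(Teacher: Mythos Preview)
Your architecture is right, but the relative step---which you correctly flag as the crux---is not actually carried out, and the mechanism you sketch is not the one that works. You invoke Arazy/Yeadon to get a Jordan $*$-automorphism plus density, and then propose to build a ``mean-zero decomposition of $M$ under $U\cong A^2$ directly from the tracial and modular data.'' That sentence is a placeholder, not an argument: there is no spectral calculus over $\widehat{A^2}$ available inside $C_p$, and nothing in the Jordan form of the isometries produces one. More concretely, your last line ``relative property $(\mathrm{T})$ for $(L,U)$ \ldots\ delivers boundedness of the $U$-cocycle through uniform convexity'' is circular: relative $(\mathrm{T})$ is a Hilbert-space statement, and you have given no bridge from it to the $C_p$-representation.

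The paper supplies exactly that bridge, and it is the point you are missing. One does not attack relative $(\mathrm{T}_{C_p})$ for the particular pair at all; one proves the general implication ``relative $(\mathrm{T})\Rightarrow$ relative $(\mathrm{T}_{C_p})$'' (Proposition~\ref{prop:C_p}) and then feeds this into the abstract criterion (Theorem~\ref{thm:criter}, condition~$(ii)$), which packages the bounded-generation/Shalom machinery you outline. The implication is obtained by \emph{conjugating the $C_p$-representation to a genuine unitary representation on the Hilbert space $C_2$} via Puschnigg's noncommutative Mazur map $M_{p,2}\colon a=u|a|\mapsto u|a|^{p/2}$. The nontrivial content is that for an isometry $V$ of $C_p$ the a priori nonlinear map $\widetilde{V}=M_{p,2}\circ V\circ M_{2,p}$ is in fact \emph{linear}: this is where Arazy's classification is actually used, since for $V\colon a\mapsto wav$ (or $w\,{}^T a\,v$) one computes the polar decomposition explicitly and sees $\widetilde{V}\colon x\mapsto wxv$ (resp.\ $w\,{}^T x\,v$). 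Uniform continuity of $M_{p,2}$ on the unit sphere then transports almost-invariant vectors from $C_p$ to $C_2$, and ordinary relative $(\mathrm{T})$ finishes. Your proposal cites the isometry classification but never uses it for this linearity computation, which is the whole trick.

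A smaller issue: your heredity argument for finite-index subgroups asserts that the induced $G$-module $\ell^p(G/H,C_p(\mathfrak{H}))$ ``is'' $C_p(\mathfrak{H}^{\oplus[G:H]})$. The isometric embedding as block-diagonals is fine, but you would still need the induced $G$-action to extend to an isometric action on the full $C_p$ of the big Hilbert space, which is not automatic when $\sigma$ involves the transpose branch of Arazy's dichotomy. The paper avoids this by proving the stronger statement that relative $(\mathrm{T})$ implies relative $(\mathrm{T}_{L^p(\mathcal{D},C_p)})$ (Corollary~\ref{cor:relLp}), using Yeadon's theorem for general semifinite noncommutative $L^p$-spaces together with St{\o}rmer's splitting of Jordan $*$-monomorphisms; the criterion then applies directly to the induced module without any embedding trick.
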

For precise definitions of property $(\mathrm{FF}_B)$ and property $(\mathrm{FF}_B)/\mathrm{T}$, see Section~\ref{sec:criter}. In the scope of the author, it seems unknown at the moment, whether quasi-cocycles on universal lattices into the \textit{trivial} representation must be bounded. Neither does the author know whether $(\mathrm{FF}_B)/\mathrm{T}$ is strictly weaker than $(\mathrm{FF}_B)$.

Therefore for any commutative and finitely generated ring $R$ (we always assume a ring $R$ is unital and associative), the following holds: the \textit{elementary group} $\mathrm{E}_{n\geq 4}(R)$ and finite index subgroups therein have $(\mathrm{F}_{\mathcal{C}_p})$, and property $(\mathrm{FF}_{\mathcal{C}_p})/\mathrm{T}$. Here the \textit{elementary group} over $R$ is the multiplicative group in $n\times n$ matrix ring $M_n(R)$ generated by \textit{elementary matrices}. An \textit{elementary matrix} in $M_n(R)$ is an $n\times n$ matrix whose entries are $1$ on diagonal and all but one entries off diagonal are $0$. The Suslin stability theorem (\cite{Sus}) states for $A=\mathbb{Z}[x_1,\ldots ,x_k]$ $\mathrm{E}_n(A)$ coincides with $\mathrm{SL}_n(A)$ provided $n\geq 3$, whereas $\mathrm{E}_2(A)$ is a proper subgroup of $\mathrm{SL}_2(A)$ (\cite{Coh}). 

Property $(\mathrm{F}_{B})$ and Property $(\mathrm{FF}_{B})$ have natural interpretation in terms of (ordinary and bounded) group cohomology. Thus by Theorem~\ref{thm:Cp}, we have the following corollary. Here \textit{bounded cohomology} is defined by requesting additional condition that every cochain has bounded range, and the map $\Psi^2$ in below is induced by the natural inclusion from bounded to ordinary cochain complexes ($\Psi^2$ is called the \textit{comparison map} in degree $2$).  For details of bounded cohomology with Banach coefficients, see \cite{Mon}, \cite{BM1}, and \cite{BM2}.

\begin{cor}\label{cor}
Let $n\geq 4$ and $R$ is a $($unital, associative,$)$ commutative and finitely generated ring. Then for any $p\in (1,\infty)$, every finite index subgroup $\Gamma$ in $\mathrm{E}_n(R)$ satisfies the following: 
\begin{itemize}
   \item  for any isometric linear representation $\rho$ on $C_p$, 
   $H^1(\Gamma ;C_p, \rho)=0$;
   \item for any isometric linear representation $\rho$ on $C_p$ $\mathrm{which}$ $\mathrm{satisfies}$ $\rho \not\supset 1_{\Gamma}$, the natural map from second bounded cohomology to second cohomology: 
   $$\Psi^2 \colon H^2_{\mathrm{b}}(\Gamma ; C_p, \rho) \to H^2(\Gamma ; C_p, \rho)$$
   is injective.
\end{itemize}
\end{cor}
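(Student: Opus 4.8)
The plan is to deduce everything from Theorem~\ref{thm:Cp} together with the standard cohomological reformulations of the two fixed point properties. The argument splits into a purely group-theoretic reduction, realizing $\mathrm{E}_n(R)$ as a quotient of the universal lattice, followed by the translation of the analytic fixed point statements into (bounded) cohomology.

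First I would carry out the reduction. Since $R$ is finitely generated and commutative, a choice of algebra generators $r_1,\ldots,r_k$ yields a surjective ring homomorphism $A=\mathbb{Z}[x_1,\ldots,x_k]\twoheadrightarrow R$, $x_i\mapsto r_i$, for a suitable $k$. This induces a surjective group homomorphism $\mathrm{E}_n(A)\twoheadrightarrow\mathrm{E}_n(R)$, because each elementary matrix $e_{ij}(a)$ is sent to $e_{ij}(\bar a)$ and every element of $R$ is hit by surjectivity. By the Suslin stability theorem (valid for $n\geq 3$, hence for our $n\geq 4$), $\mathrm{E}_n(A)=\mathrm{SL}_n(A)=G$, so $\mathrm{E}_n(R)$ is a quotient of $G$. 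Invoking the final sentence of Theorem~\ref{thm:Cp}—that both $(\mathrm{F}_{\mathcal{C}_p})$ and $(\mathrm{FF}_{\mathcal{C}_p})/\mathrm{T}$ pass to group quotients and to finite index subgroups—I conclude that every finite index subgroup $\Gamma\leq\mathrm{E}_n(R)$ enjoys both properties.

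Next comes the cohomological translation. For the first bullet, recall that $H^1(\Gamma;C_p,\rho)$ is the space of $\rho$-cocycles modulo coboundaries, and that a $\rho$-cocycle $b$ is a coboundary precisely when the affine isometric action $g\cdot\xi=\rho(g)\xi+b(g)$ admits a global fixed point. Thus $H^1(\Gamma;C_p,\rho)=0$ for every isometric $\rho$ is exactly property $(\mathrm{F}_{\mathcal{C}_p})$, which $\Gamma$ possesses. For the second bullet, I would use the exact sequence linking quasi-cocycles to the comparison map: to a quasi-$\rho$-cocycle $b$ one associates its defect $D(g,h)=b(gh)-\rho(g)b(h)-b(g)$, a bounded $2$-cocycle whose class lies in $\ker\Psi^2$ since $D=\delta b$ is an ordinary coboundary. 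This assignment descends to an isomorphism between $\ker(\Psi^2\colon H^2_{\mathrm{b}}\to H^2)$ and the space of quasi-$\rho$-cocycles taken modulo genuine cocycles and bounded maps. Property $(\mathrm{FF}_{\mathcal{C}_p})/\mathrm{T}$ asserts exactly that, when $\rho\not\supseteq 1_\Gamma$, every quasi-$\rho$-cocycle is bounded; hence this quotient vanishes and $\Psi^2$ is injective.

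I expect the only genuine subtlety to lie in the bookkeeping of the second translation—verifying that the defect map is well defined, that its image is precisely $\ker\Psi^2$, and that the hypothesis $\rho\not\supseteq 1_\Gamma$ matches the ``modulo trivial part'' qualifier in $(\mathrm{FF}_{\mathcal{C}_p})/\mathrm{T}$. Since the excerpt announces these interpretations of $(\mathrm{F}_B)$ and $(\mathrm{FF}_B)$ as standard, with references to \cite{Mon}, \cite{BM1}, and \cite{BM2}, the mathematical substance is concentrated in the quotient reduction above, and the remainder is a formal consequence of Theorem~\ref{thm:Cp}.
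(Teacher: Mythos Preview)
Your proposal is correct and follows exactly the route the paper indicates: the paper explicitly says Corollary~\ref{cor} ``is immediate from Theorem~\ref{thm:Cp} and a mere interpretation'' and records the isomorphism $\ker\Psi^2\cong\{\text{quasi-cocycles}\}/(\{\text{cocycles}\}+\{\text{bounded maps}\})$, which is precisely the translation you carry out. Your reduction of $\mathrm{E}_n(R)$ to a quotient of the universal lattice via a surjection $A\twoheadrightarrow R$ and Suslin stability, followed by the inheritance clause of Theorem~\ref{thm:Cp} (pulling back $\Gamma$ to a finite index subgroup of $G$ and then passing to the quotient), is the intended argument.
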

The latter item follows from that the kernel of $\Psi^2$ above is naturally isomorphic to the following space:
$
\{ \textrm{quasi-$\rho$-cocycles}\}/(\{ \textrm{$\rho$-cocycles}\}+\{ \textrm{bounded maps}\}).
$

Our proof of Theorem~\ref{thm:Cp} for universal lattices (and group quotients) consists of two steps: in the first step, in Section~\ref{sec:criter} we show certain criteria for a family $\mathcal{B}$ of Banach spaces with respect to which universal lattices satisfy property $(\mathrm{F}_{\mathcal{B}})$ and property $(\mathrm{FF}_{\mathcal{B}})/\mathrm{T}$. To the best of  the author's knowledge, no such criteria had been observed. Since our criteria seems to be of their own interest and importance, we state it here. For the definition of (relative) property $(\mathrm{T}_{\mathcal{B}})$), and of \textit{ultraproducts} of Banach spaces, we refer to Section~\ref{sec:criter}.
\begin{thm}$($criteria for fixed point properties for universal lattices$)$\label{thm:criter}
Set $A=\mathbb{Z}[x_1,\ldots , x_k]$. Let $\mathcal{B}$ be a family of superreflexive Banach spaces. Suppose either of the following two conditions is fulfilled:
\begin{enumerate}[$($$i$$)$]
   \item The pair $\mathrm{E}_2(A) \ltimes A^2 \trianglerighteq A^2$ has relative property $(\mathrm{T}_{\mathcal{B}})$; and $\mathcal{B}$ is stable under ultraproducts.
   \item As properties for countable discrete groups, relative property $(\mathrm{T})$ implies relative property $(\mathrm{T}_{\mathcal{B}})$: that means; for any pair of a countable discrete group $\Lambda$ and a normal $($not necessarily  proper$)$ subgroup $\Lambda_0$ therein  whenever the pair $\Lambda \trianglerighteq \Lambda_0$ has relative property $(\mathrm{T})$, it has relative property $(\mathrm{T}_{\mathcal{B}})$.
\end{enumerate}
Then for any $n\geq 4$, the universal lattice $\mathrm{SL}_n(A)$ possesses property $(\mathrm{F}_{\mathcal{B}})$ and furthermore possesses property $(\mathrm{FF}_{\mathcal{B}})/\mathrm{T}$.
\end{thm}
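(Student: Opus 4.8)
The plan is to run the two-step ``relative property (T) plus generation'' scheme underlying the proofs of Theorem~\ref{thm:SV} and of Bader--Furman--Gelander--Monod \cite{BFGM}, abstracted so that the only external inputs are the relative property $(\mathrm{T}_{\mathcal{B}})$ of copies of $\mathrm{E}_2(A)\ltimes A^2$ inside $\mathrm{SL}_n(A)$ and the superreflexive geometry of the target. First I would fix the algebraic skeleton: by the Suslin stability theorem \cite{Sus} one has $\mathrm{SL}_n(A)=\mathrm{E}_n(A)$ for $n\ge 3$, so it suffices to work with the elementary matrices $e_{ij}(a)$, and for each triple of distinct indices $i,j,l$ the $2\times 2$ block $\mathrm{E}_2(A)$ in positions $\{i,j\}$ together with the root groups $\langle e_{il}(a)\rangle$ and $\langle e_{jl}(a)\rangle$ is a copy of $\mathrm{E}_2(A)\ltimes A^2$ whose abelian part $V_{\{i,j\},l}\cong A^2$ is a pair of column entries. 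Under hypothesis (ii) the basic pair $\mathrm{E}_2(A)\ltimes A^2\trianglerighteq A^2$ has relative property (T) by the Shalom--Vaserstein theorem, hence relative property $(\mathrm{T}_{\mathcal{B}})$ for every conjugate copy \emph{directly}, together with every larger configuration (such as $\mathrm{SL}_{n-1}(A)\ltimes A^{n-1}\trianglerighteq A^{n-1}$) that also carries relative property (T). Under hypothesis (i) only the standard copy is assumed to have relative property $(\mathrm{T}_{\mathcal{B}})$, and ultraproduct stability of $\mathcal{B}$ is what lets me bootstrap from this single pair to the larger configurations: an almost-invariant-vector argument for a generating family of $\mathrm{E}_2(A)\ltimes A^2$-pieces is carried out in the ultrapower $\prod_{\mathcal{U}}B\in\mathcal{B}$, which is where ultraproduct stability is used in (i) but not in (ii).

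The analytic step converts relative property $(\mathrm{T}_{\mathcal{B}})$ into fixed points for the abelian pieces. Given an affine isometric action $\alpha=(\rho,b)$ of $G$ on some $B\in\mathcal{B}$, I would apply relative property $(\mathrm{T}_{\mathcal{B}})$ to each embedded pair to conclude that the restriction of the cocycle $b$ to the abelian part is bounded; since $B$ is superreflexive, a bounded orbit admits a canonical circumcenter fixed by the acting group (as in \cite{BFGM}), so $\mathrm{Fix}(V_{\{i,j\},l})\neq\emptyset$, and likewise for the larger abelian parts. The passage ``spectral gap $\Rightarrow$ bounded orbit'' is again the place where, under (i), I would form an ultraproduct: an unbounded sequence of cocycle values, after rescaling and taking an ultralimit, would yield an almost invariant but non-invariant vector for the abelian part in $\prod_{\mathcal{U}}B$, contradicting relative property $(\mathrm{T}_{\mathcal{B}})$ there.

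The assembly step propagates these fixed points to a global one. Using the Steinberg relations $[e_{il}(a),e_{lj}(b)]=e_{ij}(ab)$ and the commuting of non-chained root groups, I would show that the closed convex fixed-point sets of the abelian pieces have a common point: fixing a column index $l$, the $V_{\{i,j\},l}$ exhaust the full column group $C_l\cong A^{n-1}$, and for $n\ge 4$ there is enough room in the $A_{n-1}$ root system to connect any two of them through a chain of overlapping pieces whose generated subgroup again carries relative property $(\mathrm{T}_{\mathcal{B}})$; the resulting bounded-orbit conclusion forces the corresponding fixed-point sets to intersect, so $\mathrm{Fix}(C_l)\neq\emptyset$. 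Running this over all columns and using that the $C_l$ generate $\mathrm{E}_n(A)=G$, a final intersection argument of the same type produces a point fixed by all of $G$, which is property $(\mathrm{F}_{\mathcal{B}})$. This combinatorial-geometric assembly, and in particular isolating precisely why $n\ge 4$ (rather than $n=3$) is what makes the chains close up, is the step I expect to be the main obstacle.

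Finally, for property $(\mathrm{FF}_{\mathcal{B}})/\mathrm{T}$ I would rerun the same scheme one level up, with quasi-$\rho$-cocycles in place of cocycles. Relative property $(\mathrm{T}_{\mathcal{B}})$ bounds a quasi-cocycle on each abelian piece up to its uniformly bounded defect, an ultraproduct construction linearises this bounded defect into a genuine cocycle in $\prod_{\mathcal{U}}B$ to which the previous bounded-orbit argument applies, and the assembly step then bounds the quasi-cocycle on all of $G$ modulo the part on which $\rho$ contains the trivial representation. The linearisation of the defect is where the ultraproduct is most essential, and I would expect the bookkeeping of the ``modulo trivial part'' to be the only genuinely new ingredient beyond the $(\mathrm{F}_{\mathcal{B}})$ argument.
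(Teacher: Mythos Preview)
Your outline correctly isolates the two halves---promoting relative $(\mathrm{T}_{\mathcal{B}})$ for $\mathrm{E}_2(A)\ltimes A^2\trianglerighteq A^2$ to relative $(\mathrm{F}_{\mathcal{B}})$ (indeed $(\mathrm{FF}_{\mathcal{B}})$) for the column pair $\mathrm{SL}_n(A)\geqslant A^{n-1}$, and then assembling to a global statement---but the assembly step as you describe it has a genuine gap. Once each column group $C_l\cong A^{n-1}$ has a nonempty fixed-point set, your ``final intersection argument of the same type'' does not go through: distinct columns $C_l$ and $C_{l'}$ meet only in the identity, so there is no chain of overlapping root-group pieces connecting them, and no ambient pair with relative $(\mathrm{T}_{\mathcal{B}})$ whose normal part contains both. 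Nonemptiness of each $\mathrm{Fix}(C_l)$ does not force $\bigcap_l\mathrm{Fix}(C_l)\ne\emptyset$; this is exactly the step where a new idea is required, and your proposal supplies none.

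The paper closes this gap by invoking results from \cite{Mim} that all rest on Vaserstein's theorem that $\mathrm{SL}_n(A)$ is \emph{boundedly generated} by $\mathrm{SL}_{n-1}(A)$ together with one column and one row copy of $A^{n-1}$---an ingredient entirely absent from your sketch. Under hypothesis~(i), bounded generation feeds into Shalom's reduced-cohomology/scaling-limit machinery (this is where ultraproduct stability is actually used) to pass from relative $(\mathrm{F}_{\mathcal{B}})$ on the column to $(\mathrm{F}_{\mathcal{B}})$ for $\mathrm{SL}_n(A)$; then $(\mathrm{F}_{\mathcal{B}})\Rightarrow(\mathrm{T}_{\mathcal{B}})$, and a separate proposition (again using bounded generation) yields $(\mathrm{FF}_{\mathcal{B}})/\mathrm{T}$. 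Under hypothesis~(ii), ultraproducts are bypassed altogether: since $\mathrm{SL}_n(A)$ has property~$(\mathrm{T})$ by Shalom--Vaserstein, hypothesis~(ii) gives $(\mathrm{T}_{\mathcal{B}})$ for the whole group outright, and one feeds this together with relative $(\mathrm{FF}_{\mathcal{B}})$ for the column pair and bounded generation directly into the same proposition to obtain $(\mathrm{FF}_{\mathcal{B}})/\mathrm{T}$, with $(\mathrm{F}_{\mathcal{B}})$ following from trivial abelianisation. In particular, your proposed ultraproduct ``linearisation of the defect'' for quasi-cocycles is not the mechanism: in both cases $(\mathrm{FF}_{\mathcal{B}})/\mathrm{T}$ comes from $(\mathrm{T}_{\mathcal{B}})$ for the full group plus bounded generation, not from an ultralimit of rescaled quasi-cocycles.
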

Note that ``condition $(ii)$ implies the conclusion" deeply relies on Theorem~\ref{thm:SV}. Also, if condition $(ii)$ is satisfied, then relative property $(\mathrm{T}_{\mathcal{B}})$ for $\mathrm{E}_2(A) \ltimes A^2 $$\trianglerighteq A^2$ is automatic because relative property $(\mathrm{T})$ for that pair was shown in \cite{Shal1} much earlier. Here by $\Lambda:= \mathrm{E}_2(A) \ltimes A^2 $$\trianglerighteq A^2=:\Lambda_0$, we mean 
$$
\Lambda = \left\{ (Z,\zeta)=
 \left( 
\begin{array}{c|c}
Z & \zeta \\
\hline 
0\ 0 &  1 
\end{array}
\right)
 : Z \in \mathrm{E}_{2} (A), \, \zeta \in A^{2}
 \right\} \trianglerighteq \{ (I_2,\zeta ): \zeta\in A^2 \} =\Lambda_0. 
$$
For some examples of $\mathcal{B}$ which satisfy condition $(i)$ or condition $(ii)$, see Section~\ref{sec:concl} and the last part of Section~\ref{sec:criter}.

In the second step, we verify that the family $\mathcal{C}_p$ indeed fulfills condition $(ii)$:
\begin{prop}\label{prop:C_p}
For any $p\in (1,\infty)$ relative property $(\mathrm{T})$ implies  relative property $(\mathrm{T}_{\mathcal{C}_p})$ among locally compact and second countable groups. 
\end{prop}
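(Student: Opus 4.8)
The plan is to deduce relative property $(\mathrm{T}_{\mathcal{C}_p})$ for a pair $\Lambda \trianglerighteq \Lambda_0$ from its relative property $(\mathrm{T})$ by comparing an isometric representation on $C_p$ with a companion \emph{unitary} representation on the Hilbert space $C_2$ (the Hilbert--Schmidt class), and transporting (almost) invariant vectors between the two via the noncommutative Mazur map. Recall that relative property $(\mathrm{T}_{\mathcal{C}_p})$ amounts to the assertion that every isometric representation of $\Lambda$ on $C_p$ possessing almost $\Lambda$-invariant unit vectors admits a nonzero $\Lambda_0$-invariant vector (equivalently, that a uniform Kazhdan pair exists). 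When $p=2$ the space $C_2$ is itself a Hilbert space and the statement is exactly the hypothesis, so I would assume $p \neq 2$ throughout.

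First I would pin down the linear structure of the representation. Each $\rho(g)$ is a \emph{surjective} linear isometry of $C_p$ (its inverse being $\rho(g^{-1})$), so the structure theorem for isometries of Schatten $p$-classes (the type $\mathrm{I}$ instance of Yeadon's description of isometries of semifinite noncommutative $L^p$-spaces; equivalently Arazy's theorem) applies: for $p \neq 2$ there are unitaries $U_g, V_g$ on $\mathfrak{H}$ with $\rho(g)a = U_g a V_g$ or $\rho(g)a = U_g a^{\mathrm{tr}} V_g$, the transpose alternative being governed by a homomorphism $\Lambda \to \mathbb{Z}/2$. The decisive point is that each of these formulas defines an isometry of \emph{every} $C_q$ simultaneously, since the Schatten norms depend only on the singular values and both two-sided unitary multiplication and transposition preserve them. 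In particular the same operators define a representation $\rho_2$ of $\Lambda$ on the Hilbert space $C_2$, and each $\rho_2(g)$, being a surjective linear isometry of a Hilbert space, is unitary. I would check that $\rho_2$ is an honest homomorphism: the implementing datum is canonically attached to $\rho(g)$ (the scalar ambiguity $U_g \mapsto \lambda U_g$, $V_g \mapsto \lambda^{-1} V_g$ leaves $a \mapsto U_g a V_g$ unchanged), so the relation $\rho(gh)=\rho(g)\rho(h)$, valid on $C_p$, forces the identical relation for the same operators viewed on $C_2$.

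Next I would exploit the noncommutative Mazur map $M \colon C_p \to C_2$, $M(a) = u\,|a|^{p/2}$ for the polar decomposition $a = u|a|$. A short computation with polar decompositions yields \emph{exact} equivariance, $M \circ \rho(g) = \rho_2(g) \circ M$ for all $g$: in the standard case one uses $|U_g a V_g| = V_g^{*}|a|V_g$, so that $M(U_g a V_g) = U_g\, u\, |a|^{p/2}\, V_g = U_g\,M(a)\,V_g$, and in the transpose case one uses that $M$ commutes with transposition. Two consequences follow. On one hand, since $M$ restricts to a bijection of the unit spheres, a unit vector of $C_p$ is $\rho(\Lambda_0)$-fixed if and only if its image is $\rho_2(\Lambda_0)$-fixed, so the two representations have $\Lambda_0$-invariant vectors simultaneously. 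On the other hand, the noncommutative Mazur map and its inverse are uniformly continuous on the unit spheres (with modulus depending only on $p$), whence a sequence of almost $\Lambda$-invariant unit vectors for $\rho$ is carried to a sequence of almost $\Lambda$-invariant unit vectors for $\rho_2$.

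Finally I would invoke the hypothesis: $\rho_2$ is a unitary representation of $\Lambda$ having almost $\Lambda$-invariant vectors, so relative property $(\mathrm{T})$ of the pair $\Lambda \trianglerighteq \Lambda_0$ produces a nonzero $\Lambda_0$-invariant vector for $\rho_2$; pulling it back through $M$ gives a nonzero $\Lambda_0$-invariant vector for $\rho$, which is exactly relative property $(\mathrm{T}_{\mathcal{C}_p})$. I expect the main obstacle to be not this last deduction but the two structural inputs feeding it: organizing the Jordan/transpose dichotomy of the isometry structure theorem into a genuine companion unitary representation on $C_2$, and securing the uniform continuity of the noncommutative Mazur map and its inverse between the spheres of $C_p$ and $C_2$, which is the quantitatively delicate ingredient and the place where superreflexivity and the value $p \in (1,\infty)$ really enter.
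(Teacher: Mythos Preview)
Your proposal is correct and follows essentially the same route as the paper. Both arguments transport the $C_p$-representation to the Hilbert space $C_2$ via the noncommutative Mazur map, invoke Arazy's classification of isometries of $C_p$ to see that the conjugated map on $C_2$ is linear (equivalently, that the Mazur map intertwines $\rho$ with a genuine unitary representation $\rho_2$), and then use Puschnigg's uniform continuity of the Mazur map on spheres to carry almost invariant vectors across, exactly in the spirit of the Bader--Furman--Gelander--Monod argument for commutative $L^p$. The only cosmetic difference is that the paper defines $\pi(g)=M_{p,2}\circ\rho(g)\circ M_{2,p}$ and checks linearity by a polar-decomposition computation in each of Arazy's two cases, whereas you first define $\rho_2$ directly from the implementing unitaries and then verify the equivariance $M\circ\rho(g)=\rho_2(g)\circ M$; the underlying computation is identical.
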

This part is inspired by a work of M. Puschnigg \cite{Pu}, who extended a method of Bader--Furman--Gelander--Monod for the case of commutative $L^p$-spaces on $\sigma$-finite measures.

For the proof of Theorem~\ref{thm:Cp} for finite index subgroups of universal lattices, we need the $p$-induction theory for (quasi)-1-cocycles of Bader--Furman--Gelander--Monod. We also need to deal with noncommutative $L^p$-space associated with the von Neumann algebra $L^{\infty}(\mathcal{D})\otimes \mathbb{B}(\mathfrak{H})$ (with the canonical trace), where $\mathcal{D}$ is a finite measure space. We examine them in Section~\ref{sec:fi}. The arguments above in addition implie the following:
\begin{thm}\label{thm:highrank}
Higher rank lattices, in the notation of this paper, have property $(\mathrm{F}_{\mathcal{C}_p})$ for all $p \in (1,\infty )$. More precisely, the following holds true: Let $G=\Pi _{i=1}^{m}\mathbf{G}_i(k_i)$, where $k_i$ are local field, $\mathbf{G}_i(k_i)$ are $k_i$-points of Zariski connected simple $k_i$-algebraic group $\mathbf{G}_i$. Assume each simple factor $\mathbf{G}_i(k_i)$ has $k_i$-rank at least $2$. Then $G$ and lattices therein have property $(\mathrm{F}_{\mathcal{C}_p})$ for any $p\in (1,\infty)$.
\end{thm}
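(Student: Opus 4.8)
The plan is to reduce the theorem to the Bader--Furman--Gelander--Monod proof of Theorem~\ref{thm:BFGM}, observing that their argument for higher rank lattices is essentially insensitive to the choice of coefficient family, provided that family consists of superreflexive spaces and that relative property $(\mathrm{T})$ upgrades to relative property $(\mathrm{T}_{\mathcal{B}})$ along it. For $\mathcal{B}=\mathcal{C}_p$ the first condition holds because the $p$-Schatten classes are uniformly convex, hence superreflexive, for $p\in(1,\infty)$, and the second is exactly Proposition~\ref{prop:C_p}. Accordingly I would first treat the ambient group $G=\Pi_{i=1}^m \mathbf{G}_i(k_i)$ and then descend to an arbitrary lattice $\Gamma\leq G$ by $p$-induction.

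For the ambient group I would run the BFGM scheme with $\mathcal{L}^p$ replaced by $\mathcal{C}_p$. Concretely, fix an affine isometric action $\alpha(g)\xi=\rho(g)\xi+b(g)$ of $G$ on some $C_p$. Using the $k_i$-rank$\geq2$ hypothesis, each simple factor contains subgroups of the form $H\ltimes V$ for which the pair $(H\ltimes V, V)$ has relative property $(\mathrm{T})$, coming from the root-subgroup structure as in \cite{Shal1} and \cite{BFGM}. By Proposition~\ref{prop:C_p} these pairs have relative property $(\mathrm{T}_{\mathcal{C}_p})$, which forces $b$ to be bounded on the corresponding unipotent subgroups. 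A Mautner-type propagation, together with the superreflexivity of $C_p$ (so that any bounded orbit has a unique circumcenter fixed by $\alpha$), then upgrades local boundedness of $b$ to boundedness of the full orbit, equivalently to a global fixed point for $\alpha$. This establishes property $(\mathrm{F}_{\mathcal{C}_p})$ for $G$.

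To pass from $G$ to a lattice $\Gamma$ I would invoke the $p$-induction theory for $1$-cocycles of \cite{BFGM}, developed in Section~\ref{sec:fi}. Given a $\rho$-cocycle $b\colon \Gamma\to C_p(\mathfrak{H})$, induction produces a cocycle $\tilde b\colon G\to E$, where $E$ is the noncommutative $L^p$-space associated with $L^\infty(G/\Gamma)\,\bar{\otimes}\,\mathbb{B}(\mathfrak{H})$ equipped with its canonical trace and where $G/\Gamma$ carries the finite invariant measure. The fixed point for $G$ produced in the previous step, applied now to this induced coefficient space rather than to $C_p$ itself, restricts to a bounded orbit, hence a fixed point, for the original action of $\Gamma$ on $C_p$. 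Since this induction works uniformly over all (possibly irreducible) lattices, no product structure on $\Gamma$ need be assumed.

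The main obstacle is precisely the last ingredient: one must check that the \emph{induced} coefficient spaces $E$ above, and not merely the spaces $C_p$ in the family $\mathcal{C}_p$, remain within reach of the fixed-point argument. This amounts to verifying that relative property $(\mathrm{T})$ continues to imply relative property $(\mathrm{T}_{\mathcal{B}})$ for the enlarged family of noncommutative $L^p$-spaces attached to $L^\infty(\mathcal{D})\,\bar{\otimes}\,\mathbb{B}(\mathfrak{H})$ with $\mathcal{D}$ a finite measure space, and that such spaces stay superreflexive. I expect the superreflexivity to be routine, while the relative $(\mathrm{T}_{\mathcal{B}})$ statement should follow by the same Puschnigg-inspired argument underlying Proposition~\ref{prop:C_p}, now carried out in the semifinite von Neumann algebra $L^\infty(\mathcal{D})\,\bar{\otimes}\,\mathbb{B}(\mathfrak{H})$; the bulk of the technical work thus lies in the measurable and von Neumann algebraic bookkeeping of Section~\ref{sec:fi}.
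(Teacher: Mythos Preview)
Your proposal is correct and matches the paper's approach: establish $(\mathrm{F}_{\mathcal{C}_p})$ and $(\mathrm{F}_{L^p(\mathcal{D},C_p)})$ for the ambient group $G$ by feeding Proposition~\ref{prop:C_p} and its semifinite extension (Corollary~\ref{cor:relLp}) into the BFGM scheme, then descend to lattices by $p$-induction. The technical work you anticipate in your last paragraph is exactly what the paper carries out in Proposition~\ref{prop:LpCp} via Yeadon's and St{\o}rmer's theorems; the only point you leave implicit is the $p$-integrability of the induced cocycle for non-cocompact lattices, which the paper secures in Remark~\ref{rem:int}$(2)$ by invoking \cite{LMR}.
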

This result is a noncommutative analogue of Theorem~\ref{thm:BFGM}, and can be seen a generalization of (a part of) a work of Puschnigg \cite[Corollary 5.10]{Pu}. For noncommutative $L^p$-spaces associated with semifinite von Neumann algebra other than $C_p$ (associated with $(\mathbb{B}(\mathfrak{H}),\mathrm{Tr})$), see Remark~\ref{rem:typeII}.

\bigskip

\textit{Organization of this paper}: Section~\ref{sec:criter} is devoted to basic definitions and proof of Theorem~\ref{thm:criter}. In Section~\ref{sec:Cp}, we prove Proposition~\ref{prop:C_p}, and thus prove Theorem~\ref{thm:Cp} for universal lattices. In Section~\ref{sec:fi}, we consider $p$-inductions and generalization of Proposition~\ref{prop:C_p}. By these, we complete the proof of Theorem~\ref{thm:Cp} and Theorem~\ref{thm:highrank}. Corollary~\ref{cor} is immediate from Theorem~\ref{thm:Cp} and a mere interpretation (see \cite{BHV}, \cite{BFGM}, \cite{BM1}, \cite{BM2}, and \cite{Mon}), so that we will not exhibit a proof of that. In Section~\ref{sec:concl}, we make some concluding remarks on condition $(i)$ in Theorem~\ref{thm:criter}.

\section{Property $(\mathrm{T}_B)$, $(\mathrm{F}_B)$, $(\mathrm{FF}_B)$, $(\mathrm{FF}_B)/\mathrm{T}$; and our criteria}\label{sec:criter}
Hereafter, \textit{we assume all Banach spaces }$B$ \textit{in this paper are superreflexive.} This condition is equivalent to that $B$ has a compatible norm to a uniformly convex and uniformly smooth norm. Basic example is any $L^p$-space with $p\in (1,\infty)$. We refer to \cite[\S A]{BL} for a comprehensive study of this topic. In this section, $\Lambda$ is a locally compact second countable group, $\rho$ is a (continuous) isometric linear representation of $\Lambda$ in $B$. Also every subgroup of $\Lambda$ is assumed to be closed. 

Here we recall some basic results from \cite[\S 2]{BFGM}: first, thanks to superreflexivity of $B$, there exists a uniformly convex and uniformly smooth norm on $B$, compatible to the original norm, with respect to which $\rho$ is still isometric. Hence we can assume $B$ is uniformly convex and uniformly smooth. Second, then by the uniform smoothness, there is a natural complement $B'_{\rho(\Lambda)}$  of $B^{\rho(\Lambda)}$ in $B$ which is a $\rho(\Lambda)$ invariant space. Here $B^{\rho(\Lambda)}$ is the subspace of $B$ of $\rho(\Lambda)$-invariant vectors. Also, $B'_{\rho(\Lambda)}$ is isomorphic to $B/B^{\rho(\Lambda)}$ as $\rho(\Lambda)$ representation spaces. Precisely, $B'_{\rho(\Lambda)}$ is the annihilator of the subspace $(B^{*})^{\rho^{\dagger}(\Lambda)}\subseteq B^*$ in $B$, where $\rho^{\dagger}$ denotes the contragradient representation: for the duality $\langle \cdot ,\cdot \rangle:B\times B^* \to \mathbb{C}$ and for any $g\in \Lambda$, any $x\in B$, and any $\phi\in B^*$, 
$
\langle x,\rho^{\dagger}(g)\phi \rangle:=\langle \rho(g^{-1})x,\phi \rangle$. 
Finally, by uniform convexity of $B$, for any (continuous) affine isometric action, the existence of a $\Lambda$-fixed point is equivalent to boundedness of some (, equivalently, any) $\Lambda$-orbit. By considering orbit of the origin $0\in B$, this means for any $\rho$ on $B$, a $\rho$(1-)-cocycle $b$ is a coboundary if and only if it has bounded range. 

Now we recall definitions of property $(\mathrm{T}_{B})$, property $(\mathrm{F}_{B})$ in \cite{BFGM}; and property $(\mathrm{FF}_{B})$, property $(\mathrm{FF}_B)/\mathrm{T}$ in \cite{Mim}. Note that if $\Lambda_0 \trianglelefteq $$\Lambda$ is a \textit{normal} subgroup, then $B=$$B^{\rho(\Lambda_0)}\oplus B'_{\rho(\Lambda_0)}$ can be seen as a decomposition of $B$ as $\Lambda$-representation spaces.

\begin{defn}
Let $\Lambda$, $B$, $\rho$ be as in the setting of this section. 
\begin{itemize} 
   \item The representation $\rho$ of $\Lambda$ on $B$ is said to have \textit{almost invariant vectors}, written as $\rho \succ 1_{\Lambda}$, if the following holds: for any compact subset $K\subset \Lambda$, there exists a sequence of unit vectors $(\xi_n)_n$ in $B$ such that 
  $  \max_{s\in K} $$\|\xi_n -\rho(s)\xi_n \| \to 0$  as $n\to \infty$.
   \item A continuous map $b\colon \Lambda \to B$ is called a $\rho$\textit{-cocycle} if for any $g,h\in \Lambda$, 
   $b(gh)=$$b(g)+\rho(g)b(h)$ holds. The map $b$ is called a \textit{quasi-}$\rho$\textit{-cocycle} if $\sup_{g,h}$
   $\| b(gh)$$-b(g)-\rho (g)b(h)\| $$<\infty$ holds.
\end{itemize}
\end{defn}

\begin{defn}$($\cite{BFGM},\cite{Mim}$)$
Let $\Lambda$, $B$, $\rho$ be as in the setting of this section, and fix $B$. 
\begin{enumerate}[$($$1$$)$]
   \item A group pair $\Lambda \trianglerighteq \Lambda_0$ is said to have \textit{relative property }$(T_B)$ if the following holds true: for any $\rho$ of $\Gamma$ on $B$, the isometric linear representation $\rho'$, constructed by restricting $\rho$ on $B'_{\rho(\Lambda_0)}$, $\rho '\colon$$ \Lambda \to O(B'_{\rho(\Lambda_0)})$ does not have almost invariant vectors. The group $\Lambda$ is said to have \textit{property }$(T_B)$ if $\Lambda \trianglerighteq \Lambda$ has relative property $(\mathrm{T}_B)$. 
   \item A group pair $\Lambda \geqslant \Lambda_0$ is said to  have \textit{relative property }$(F_B)$ if for any $\rho$ of $\Lambda$ on $B$, every $\rho$-cocycle $b$ is a coboundary on the subgroup $\Lambda_0$, namely, there exists $\xi \in B$ such that for any $h\in \Lambda_0$, $b(h)=$$\xi -\rho (h)\xi$ holds. Equivalently, $b$ is bounded on $\Lambda_0$. The group $\Lambda$ is said to have \textit{property }$(F_B)$ if $\Lambda \geqslant \Lambda$ has relative property $(\mathrm{F}_B)$.
   \item A group pair $\Lambda \geqslant \Lambda_0$ is said to have \textit{relative property }$(FF_B)$ if for any $\rho$ of $\Lambda$ on $B$, every (continuous) quasi-$\rho$-cocycle $b$ is bounded on $\Lambda_0$. The group  $\Lambda$ is said to have \textit{property }$(FF_B)$ if $\Lambda \geqslant \Lambda$ has relative property $(\mathrm{FF}_B)$.
   \item The group $\Lambda$ is said to have \textit{property }$(FF_B)/T$    
   (, which means \textit{property }$(FF_B)$ \textit{modulo trivial part},) if for any $\rho$ of $\Lambda$ on $B$ and any (continuous) quasi-$\rho$-cocycle $b$, $b'(\Lambda)$ is bounded. Here $b'\colon \Lambda \to B'_{\rho(\Lambda)}$ means the projection of $b$ to a quasi-$\rho'$-cocycle which ranges in $B'_{\rho(\Lambda)}$, associated with the decomposition $B=B^{\rho (\Lambda)}\oplus B'_{\rho(\Lambda)}$. 
\end{enumerate}
If $B=\mathcal{B}$ is a family of Banach spaces, these seven properties are defined as having corresponding properties for all Banach spaces $E\in \mathcal{B}$.
\end{defn}
We make two remarks on the definition above: first, if $B$ is a general (possibly not superreflexive) Banach space, $B'_{\rho(\Lambda)}$ is replaced with $B/B^{\rho(\Lambda)}$ in the definitions of (relative) $(\mathrm{T}_B)$ and $(\mathrm{FF}_B)/\mathrm{T}$. Secondly, if $\Lambda$ has a compact abelianization, then property $(\mathrm{FF}_B)/\mathrm{T}$  implies property $(\mathrm{F}_B)$. In particular, the following holds true:

\begin{lem}\label{lem:abel}
For any $p\in (1,\infty)$, property $(\mathrm{FF}_{\mathcal{L}^p})/\mathrm{T}$ implies property $(\mathrm{F}_{\mathcal{L}^p})$. 
\end{lem}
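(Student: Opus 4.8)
The plan is to prove Lemma~\ref{lem:abel} as a special case of the second remark preceding it, namely that property $(\mathrm{FF}_B)/\mathrm{T}$ implies property $(\mathrm{F}_B)$ whenever the group has compact abelianization. First I would recall why the abstract implication holds, and then observe that the relevant $L^p$-spaces are superreflexive, so that the splitting $B = B^{\rho(\Lambda)} \oplus B'_{\rho(\Lambda)}$ is available. Concretely, fix $p \in (1,\infty)$, let $E$ be an $L^p$-space in the family $\mathcal{L}^p$, let $\rho$ be an isometric linear representation of $\Lambda$ on $E$, and let $b$ be a $\rho$-cocycle; since every cocycle is in particular a quasi-cocycle (with defect $0$), property $(\mathrm{FF}_{\mathcal{L}^p})/\mathrm{T}$ gives that the projection $b' \colon \Lambda \to E'_{\rho(\Lambda)}$ is bounded. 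It then remains to control the complementary piece $b''$ ranging in the invariant part $E^{\rho(\Lambda)}$ and to conclude that $b$ itself is bounded, hence a coboundary by the superreflexive characterization recalled at the start of Section~\ref{sec:criter}.

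The key step is handling $b''$. On the invariant subspace $E^{\rho(\Lambda)}$ the representation $\rho$ acts trivially, so the cocycle identity $b''(gh) = b''(g) + \rho(g) b''(h)$ collapses to $b''(gh) = b''(g) + b''(h)$; that is, $b''$ is a continuous homomorphism from $\Lambda$ into the additive group of the Banach space $E^{\rho(\Lambda)}$. Because the target is abelian and torsion-free, such a homomorphism factors through the abelianization $\Lambda^{\mathrm{ab}} = \Lambda / \overline{[\Lambda,\Lambda]}$. Here I would invoke the hypothesis implicit in the intended statement, that universal lattices (and the groups to which the lemma is applied) have \emph{compact}, indeed finite, abelianization: a continuous homomorphism from a group with compact abelianization into a vector space must have relatively compact, hence bounded, image, and since the image is a subgroup of a torsion-free group it must be trivial. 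Thus $b'' \equiv 0$, and in particular $b''$ is bounded.

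Combining the two pieces, $b = b' + b''$ is bounded on all of $\Lambda$, and by the equivalence recalled earlier (uniform convexity implies that a cocycle with bounded range is a coboundary) the affine action associated with $b$ has a global fixed point. Since $E \in \mathcal{L}^p$ and $\rho$ were arbitrary, $\Lambda$ has property $(\mathrm{F}_{\mathcal{L}^p})$, which is precisely the conclusion.

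The main obstacle I anticipate is not the reduction itself but justifying the factorization through the abelianization cleanly in the present generality. One must be careful that $E^{\rho(\Lambda)}$ is a genuine Banach space (a closed subspace of the superreflexive $E$), that $b''$ is continuous as the composition of the continuous $b$ with the norm-one projection onto the invariant part, and that ``compact abelianization plus torsion-free target forces triviality'' is applied to a continuous homomorphism rather than merely an abstract one. For the concrete groups of interest this is immediate because their abelianizations are finite, so the only continuous homomorphism into a torsion-free abelian group is zero; the lemma is then an entirely formal consequence of the preceding definitions and the superreflexive splitting, and requires no hard analysis.
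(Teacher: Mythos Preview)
Your argument has a genuine gap: you treat compact abelianization as a standing hypothesis ``implicit in the intended statement'', but the lemma is stated for an \emph{arbitrary} locally compact second countable group $\Lambda$, with no such assumption. What you have written is essentially a proof of the remark immediately preceding the lemma (that compact abelianization together with $(\mathrm{FF}_B)/\mathrm{T}$ yields $(\mathrm{F}_B)$), not of the lemma itself. The actual content of Lemma~\ref{lem:abel} is precisely that property $(\mathrm{FF}_{\mathcal{L}^p})/\mathrm{T}$ \emph{forces} the abelianization to be compact, and this is the step you are missing.

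The paper supplies this step as follows. Suppose $H:=\Lambda/\overline{[\Lambda,\Lambda]}$ were noncompact, and pull back the left regular representation $\lambda_H$ on $L^p(H)$ to a representation $\rho$ of $\Lambda$. Since $H$ is abelian and noncompact, one has $\lambda_H \succ 1_H$ but $\lambda_H \not\supset 1_H$; the first fact produces an unbounded $\rho$-cocycle, while the second says $B'_{\rho(\Lambda)}=B$, so the cocycle equals its own ``non-trivial part'' $b'$. This contradicts $(\mathrm{FF}_{\mathcal{L}^p})/\mathrm{T}$. Note that this argument genuinely uses that $\mathcal{L}^p$ contains $L^p(H)$ for every such $H$ --- it is not available for an arbitrary family $\mathcal{B}$, which is why the preceding remark needed compact abelianization as a hypothesis and why the lemma is specific to $\mathcal{L}^p$. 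Once compactness of $H$ is established, your decomposition $b=b'+b''$ and the observation that $b''$ factors through $H$ finish the proof exactly as you describe.
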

\begin{proof}
Let $\Lambda$ have $(\mathrm{FF}_{\mathcal{L}^p})/\mathrm{T}$. Suppose $H:=\Lambda/\overline{[\Lambda,\Lambda]}$ is noncompact. Then consider the ($L^p$-)left regular representation $\lambda_H$ of $H$ in $L^p(H)$ and a representation $\rho$ of $\Lambda$ which is the pull-back of $\lambda_H$ by $\Lambda\twoheadrightarrow H$. Since $H$ is abelian and noncompact, $\lambda_H \succ 1_H$ but $\lambda_H \not\supset 1_H$. Therefore there exists a $\lambda_H$-cocycle which is not a coboundary (for details, see \cite[\S 3.a]{BFGM}). Thus we have a $\rho$-cocycle which is not a coboundary. However this contradicts $\rho \not\supset 1_{\Lambda}$ and $(\mathrm{FF}_{\mathcal{L}_p})/\mathrm{T}$ for $\Lambda$.
\end{proof}

By the Delorme--Guichardet theorem, if $\mathcal{B}$ is the family $\mathcal{H}$ of all Hilbert spaces $(=\mathcal{L}^2)$, then property $(\mathrm{F}_{\mathcal{H}})=(\mathrm{FH})$ is equivalent to property $(\mathrm{T}_{\mathcal{H}})=(\mathrm{T})$. We note that in the sprit of this, property $(\mathrm{FF}_{\mathcal{H}})$ was previously defined and called \textit{property }$(TT)$ by N. Monod \cite{Mon}. In the spirit of this, we also call $(\mathrm{FF}_{\mathcal{H}})/\mathrm{T}$ \textit{property} $(TT)/T$. By Lemma~\ref{lem:abel}, $(\mathrm{TT})/\mathrm{T}$ implies $(\mathrm{T})$.

We now explain one more concept which appears in Theorem~\ref{thm:criter}, namely, an \textit{ultraproduct} of Banach spaces. For precise definition and comprehensive treatment, see \cite{He}. Here we briefly recall the definition: Take a non-principal ultrafilter $\omega$ on $\mathbb{N}$ and fix it. For a sequence $((B_n, \|\cdot \|_n))_n$ of pairs of a Banach space and the norm, we define the \textit{ultraproduct} $\lim_{\omega}B_n=(B_{\omega}, \|\cdot \|_{\omega})$ as follows: we define $B_{\omega}$ as $( \bigoplus_{n\in \mathbb{N}} (B_n, \|\cdot \|_n) )_{\infty} $$/ \mathcal{N}$.
Here $( \bigoplus_{n\in \mathbb{N}} (B_n, \|\cdot \|_n) )_{\infty}$ means the set of sequences with bounded norms. We define a seminorm $\|\cdot \|$ there by $\|(\xi_n )_n\|:=$$\lim_{\omega}\| \xi_n\|_n$, and set $\mathcal{N}$ as the null subspace with respect to $\|\cdot \|$. Finally, define a norm $\|\cdot \|_{\omega}$ on $B_{\omega}$ as the induced norm by $\|\cdot \|$ above. We say a family $\mathcal{B}$ is \textit{stable under ultraproducts} if whenever $B_n\in \mathcal{B}$ for all $n\in \mathbb{N}$, $\lim_{\omega}B_n \in \mathcal{B}$ holds.

Before proving of Theorem~\ref{thm:criter}, we make a remark that ``condition $(i)$ implies the conclusion" has sprit of the original  Shalom's strategy \cite{Shal4} to prove Theorem~\ref{thm:SV}, and of an observation by M. Gromov. Precisely, for the proof of this part, one essential part is to use \textit{reduced}-1-cohomology. The original argument in   \cite{Shal2} uses conditionally negative definite functions, but Gromov \cite{Gr} observed that it can be done in terms of \textit{scaling limits} on a metric space, which are special cases of ultraproducts for Banach spaces. For details, see \cite[\S 5]{Mim} for instance.

\begin{proof}(\textit{Theorem~\ref{thm:criter}})
The proof follows from a combination of previous results in \cite{Mim}. With keeping the same notation as in Theorem~\ref{thm:criter}, we list necessary results: 
\begin{thm}$($\cite[Theorem 1.3, Theorem 6.4]{Mim}$)$\label{thm:res1}
Let $B$ be a superreflexive space or a family of them. Suppose $\mathrm{E}_2(A)\ltimes A^2$$\trianglerighteq A^2$ has relative property $(\mathrm{T}_B)$. Then for any $m\geq 3$, $\mathrm{SL}_m(A)\ltimes A^m$$\geqslant A^m$ has relative property $(\mathrm{F}_B)$. In fact, this pair has relative property $(\mathrm{FF}_B)$.
\end{thm}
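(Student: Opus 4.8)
The plan is to establish the stronger conclusion, relative property $(\mathrm{FF}_B)$, from which relative property $(\mathrm{F}_B)$ is immediate, since an honest $\rho$-cocycle is a quasi-$\rho$-cocycle of defect $0$. Write $\Lambda=\mathrm{SL}_m(A)\ltimes A^m$ and $\Lambda_0=A^m$, fix an isometric representation $\rho$ of $\Lambda$ on $B$ and a quasi-$\rho$-cocycle $b$ of defect $D=\sup_{g,h}\|b(gh)-b(g)-\rho(g)b(h)\|<\infty$. Because $\Lambda_0$ is normal, the decomposition $B=B^{\rho(\Lambda_0)}\oplus B'_{\rho(\Lambda_0)}$ is $\Lambda$-invariant; let $P$ be the associated $\Lambda$-equivariant projection onto $V:=B^{\rho(\Lambda_0)}$. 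Splitting $b=Pb+(1-P)b$ reduces everything to bounding each summand on $\Lambda_0$.

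First I would treat the transverse part $(1-P)b$, valued in $B'_{\rho(\Lambda_0)}$. For each pair $i\neq j$ the $(i,j)$-corner furnishes a copy of $\mathrm{E}_2(A)\ltimes A^2$ inside $\Lambda$ whose $A^2$ lies in $\Lambda_0$; as these copies are genuinely isomorphic to the pair in the hypothesis, each inherits relative property $(\mathrm{T}_B)$. The first substantive step is to promote these corner statements to relative property $(\mathrm{T}_B)$ for the whole pair $\Lambda\trianglerighteq\Lambda_0$: since $A^m=\sum_{i<j}A^2_{ij}$ and $\mathrm{SL}_m(A)$ permutes and mixes the corners, the absence of almost invariant vectors in each $B'_{\rho(A^2_{ij})}$ should force their absence in $B'_{\rho(\Lambda_0)}$. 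Granting this, uniform convexity of $B$ yields the standard dichotomy: an unbounded transverse orbit, after rescaling and passage to an ultralimit, would produce a $\Lambda$-almost invariant vector in an ultrapower of $B'_{\rho(\Lambda_0)}$, contradicting relative $(\mathrm{T}_B)$; hence $(1-P)b$ is bounded on $\Lambda_0$.

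Next I would treat the invariant part $\phi:=Pb$. On $V$ the subgroup $\Lambda_0$ acts trivially, so $\phi|_{\Lambda_0}\colon A^m\to V$ is a quasi-homomorphism which, by projecting the cocycle identity through $P$, is $\mathrm{SL}_m(A)$-equivariant up to an error controlled by $D$, namely $\phi(g\cdot v)=\bar\rho(g)\phi(v)+O(D)$, where $\bar\rho$ is the induced representation of $\mathrm{SL}_m(A)$ on $V$. Here the hypothesis $m\geq 3$ enters through the Steinberg relations: for distinct $i,j,k$ one has inside $\Lambda$ the identity $(1,ab\,e_i)=[(E_{ij}(a),0),(1,b\,e_j)]$, and both $(1,ab\,e_i)$ and the conjugate $(E_{ij}(a),0)(1,b\,e_j)(E_{ij}(-a),0)$ lie in $A^m$, hence act as $\mathrm{id}$ on $V$. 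Evaluating the projected quasi-cocycle on such commutators collapses one of the operator coefficients $\mathrm{id}-\bar\rho(\cdot)$ to zero and reduces $\phi$ on all of $A^m$, up to $O(D)$, to the finitely many vectors $\phi(e_i)$, each subject to strong $\bar\rho$-invariance under the elementary matrices fixing $e_i$. A Mautner-type propagation around the three indices (available precisely because $m\geq 3$) then pins these vectors down, forcing $\phi$ to be bounded on $\Lambda_0$, and $\phi\equiv 0$ in the honest-cocycle case $D=0$.

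The hard part will be the genuinely higher-rank bootstrapping that turns the corner-by-corner information into uniform global control. Concretely, one must (a) prove propagation of the spectral gap from the corners $B'_{\rho(A^2_{ij})}$ to $B'_{\rho(\Lambda_0)}$, and (b) carry the defect $D$ through the web of Steinberg relations so that the resulting bounds on $\phi(e_i)$ and on the transverse part are uniform, independent of the depth of the element reached in $A^m$, and so that the invariant-part argument actually closes rather than merely reshuffling the unknowns $\phi(e_i)$. These are the Banach-space, quasi-cocycle incarnations of Shalom's argument, and it is exactly here that the commutation combinatorics of $\mathrm{E}_m(A)$ with $m\geq 3$ are indispensable; by comparison the uniform-convexity dichotomy and the $\rho(\Lambda_0)$-invariant splitting are comparatively routine.
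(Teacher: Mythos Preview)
This theorem is not proved in the present paper at all; it is quoted verbatim from \cite[Theorem~1.3, Theorem~6.4]{Mim} as a black box inside the proof of Theorem~\ref{thm:criter}. So your task was really to reconstruct the argument of \cite{Mim}, and there the transverse step you sketch does not go through.

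The gap is in your treatment of $(1-P)b$. You write that ``an unbounded transverse orbit, after rescaling and passage to an ultralimit, would produce a $\Lambda$-almost invariant vector in an ultrapower of $B'_{\rho(\Lambda_0)}$, contradicting relative $(\mathrm{T}_B)$.'' But the spectral gap you obtained on $B'_{\rho(\Lambda_0)}$ passes unchanged to the diagonal representation on its ultrapower, so the rescaled vectors $b(g_n)/\|b(g_n)\|$ are \emph{never} almost $\Lambda$-invariant; there is nothing to contradict. More fundamentally, the implication ``relative $(\mathrm{T}_B)\Rightarrow$ relative $(\mathrm{F}_B)$'' is not known for general superreflexive $B$ (cf.\ \cite[Theorem~1.3]{BFGM}, where only the reverse implication is asserted), so you cannot invoke it as a ``standard dichotomy.'' Note also that the ultraproduct machinery in this paper belongs to Theorem~\ref{thm:res2} (Shalom's machinery), which carries an explicit ultraproduct-stability hypothesis; Theorem~\ref{thm:res1} has no such hypothesis, and its proof in \cite{Mim} does not need one.

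What \cite{Mim} actually does is closer to your invariant-part sketch than to your transverse-part sketch: rather than first proving relative $(\mathrm{T}_B)$ for the big pair and then converting it to $(\mathrm{F}_B)$, one works directly with the quasi-cocycle and exploits the commutator relations available for $m\geq 3$. Each coordinate $A\cdot e_i$ of $A^m$ sits inside a copy of $A^2$ normalised by a copy of $\mathrm{E}_2(A)\ltimes A^2$, and one uses the relative $(\mathrm{T}_B)$ hypothesis on that corner together with an explicit displacement estimate (not a rescaling/ultralimit) to bound $b$ on that coordinate; bounded generation of $A^m$ by these coordinates then finishes. Your step~(b) is essentially the right picture; your step~(a) needs to be replaced by the same kind of hands-on estimate rather than an abstract $(\mathrm{T}_B)\Rightarrow(\mathrm{F}_B)$ appeal.
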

\begin{thm}$($\cite[Theorem 5.5]{Mim}, Shalom's machinery$)$\label{thm:res2}
Let $\mathcal{B}$ be a family of superreflexive Banach spaces and $n\geq 3$. Suppose $\mathrm{SL}_n(A)\geqslant A^{n-1}$ has relative property $(\mathrm{F}_{\mathcal{B}})$, where $A^{n-1}$ sits on a unipotent part, from  $(1,n)$-th to $(n-1,n)$-th entries. If $\mathcal{B}$ is stable under ultraproducts, then $\mathrm{SL}_n(A)$ possesses property $(\mathrm{F}_{\mathcal{B}})$.
\end{thm}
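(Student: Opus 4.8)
The plan is to prove property $(\mathrm{F}_{\mathcal{B}})$ for $G=\mathrm{SL}_n(A)$ by reducing it to the vanishing of first cohomology and then running Shalom's machinery, with ultraproducts entering precisely to compensate for the absence of a contracting torus over $A$. First I would fix $B\in\mathcal{B}$, an isometric representation $\rho$ of $G$ on $B$, and a $\rho$-cocycle $b$, and normalize. Using the superreflexive decomposition $B=B^{\rho(G)}\oplus B'_{\rho(G)}$, the cocycle splits as $b=b_0+b'$ with $b_0\colon G\to B^{\rho(G)}$ a homomorphism into an abelian group; since $n\geq 3$ forces $\mathrm{SL}_n(A)=\mathrm{E}_n(A)$ (Suslin) to be perfect, $b_0=0$, so we may assume $\rho$ has no nonzero invariant vectors. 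As $B$ is uniformly convex, a cocycle is a coboundary if and only if it is bounded, so $(\mathrm{F}_{\mathcal{B}})$ for $G$ is equivalent to $H^1(G;B,\rho)=0$ for every such $\rho$, which I will obtain via reduced first cohomology and a scaling-limit argument.

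Next I would use the relative hypothesis to normalize $b$ on the column $V:=A^{n-1}$. Relative property $(\mathrm{F}_{\mathcal{B}})$ for $G\geqslant V$ gives a point fixed by the affine $V$-action; subtracting the corresponding coboundary of $G$, we may assume $b|_V=0$, so that $b(gv)=b(g)$ and $b(vg)=\rho(v)b(g)$ for $v\in V$. Because the hypothesis is invariant under inner automorphisms, every signed-permutation conjugate $V_i=wVw^{-1}$ (the other ``columns'') also has relative property $(\mathrm{F}_{\mathcal{B}})$ with $G$, so $b$ is bounded on each $V_i$; and the $V_i$ together generate $\mathrm{E}_n(A)=G$. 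It does not, however, follow that $b$ is bounded on $G$: uniform boundedness on generating subgroups is not preserved under the cocycle relation, and closing this gap is exactly what the machinery must achieve.

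The heart is to show that any such $b$ is in fact bounded, and the main new feature is that over $A=\mathbb{Z}[x_1,\ldots,x_k]$ the diagonal torus consists only of sign matrices, so there is no element contracting a unipotent and the classical Mautner phenomenon is unavailable. Shalom's machinery replaces it by a scaling limit. Assume $b$ is unbounded, pick $g_j$ with $t_j:=\|b(g_j)\|\to\infty$, and form the ultraproduct $B_\omega=\lim_{\omega}(B,\|\cdot\|/t_j)$, which lies in $\mathcal{B}$ precisely because $\mathcal{B}$ is stable under ultraproducts. The rescaled affine orbits converge to a nontrivial affine isometric action of $G$ on $B_\omega$, that is, to a nonzero $\rho_\omega$-cocycle $b_\omega$. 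But $b$ is bounded on every column $V_i$ with a bound independent of $t_j$, so the rescaled restrictions tend to $0$ and $b_\omega$ vanishes on $\bigcup_{i}V_i$; since a cocycle vanishing on a generating set is identically zero and $\bigcup_{i}V_i$ generates $\mathrm{E}_n(A)=G$, we get $b_\omega=0$, contradicting its nontriviality. Hence every cocycle is bounded and $G$ has $(\mathrm{F}_{\mathcal{B}})$; in cohomological terms this is the vanishing $\overline{H}^1(G;B,\rho)=0$ together with closedness of the coboundary subspace, i.e. $H^1(G;B,\rho)=0$.

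The hard part will be making the scaling limit rigorous intrinsically in the superreflexive Banach setting: constructing the limiting representation $\rho_\omega$ and affine action on $B_\omega$, verifying that it is a genuine nontrivial isometric action so that $b_\omega\neq 0$, and checking that the uniform bound for $b$ on each column really survives the limit so that $b_\omega$ dies on the generators. All of the Hilbert-space orthogonality and harmonic-representative arguments of the classical case must be replaced by the uniform-convexity and uniform-smoothness geometry of $B$ and $B_\omega$, and it is precisely the absence of a contracting torus over $A$ that forces the passage to ultraproducts, making the stability hypothesis on $\mathcal{B}$ indispensable rather than cosmetic.
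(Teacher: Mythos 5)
Your preliminary reductions are fine (killing the $B^{\rho(G)}$-component via perfectness of $\mathrm{SL}_{n\geq 3}(A)=\mathrm{E}_n(A)$, arranging $b|_V=0$, conjugating the relative hypothesis to all columns $V_i$), but the scaling-limit step, which is the whole content of the theorem, is set up in a way that cannot work. With your normalization $B_\omega=\lim_\omega(B,\|\cdot\|/t_j)$ and $t_j=\|b(g_j)\|\to\infty$, the limit cocycle is $b_\omega(g)=\lim_\omega b(g)/t_j$, and for every \emph{fixed} $g\in G$ one has $\|b(g)\|/t_j\to 0$; so $b_\omega\equiv 0$ identically, the ``nontrivial limit action'' is never constructed, and your final contradiction (``$b_\omega$ vanishes on generators, contradicting its nontriviality'') is circular. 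To obtain a nontrivial limit one must recenter: conjugate the affine action by translations to well-chosen points $x_j$, i.e.\ replace $b$ by $g\mapsto \rho(g)x_j+b(g)-x_j$, and rescale by a local displacement $r_j=\max_{s\in S}\|\alpha(s)x_j-x_j\|$ over a finite generating set, with $x_j$ chosen so that the limit displacement stays bounded below (this is the Gromov/Shalom scaling-limit step, and the place where stability of $\mathcal{B}$ under ultraproducts is genuinely used). But after recentering your key estimate collapses: the restriction of the recentered cocycle to a column $V_i$ is $v\mapsto \rho(v)x_j+b(v)-x_j$, whose size is governed by the distance of $x_j$ to the $V_i$-fixed-point set at scale $r_j$, not by the uniform bound $M$ on $\|b|_{V_i}\|$. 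Hence ``bounded on each column with bound independent of $t_j$ $\Rightarrow$ $b_\omega$ dies on $\bigcup_i V_i$'' is unjustified; boundedness on each $V_i$ only gives $\|b(g)\|\leq M\,\ell(g)$ with $\ell$ the word length with respect to $\bigcup_i V_i$, and mere generation puts no bound on $\ell$.

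This is exactly why the paper's proof (quoted from [Mim, Theorem 5.5]) needs strictly more than generation by the columns: it runs on the quadruple $(\Lambda,\Lambda',H_1,H_2)=(\mathrm{SL}_n(A),\mathrm{SL}_{n-1}(A),A^{n-1},A^{n-1})$ (last column and last row) and uses two ingredients absent from your sketch. First, the normalization condition: $\Lambda'=\mathrm{SL}_{n-1}(A)$ normalizes $H_1$ and $H_2$, which yields the Mautner-type algebraic identity that once $b|_{H_1}=0$ one gets $b(\lambda)=\rho(h)b(\lambda)$ for all $h\in H_1$, $\lambda\in\Lambda'$, i.e.\ $b(\Lambda')\subseteq B^{\rho(H_1)}$; this is what substitutes for the missing contracting torus. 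Second, \emph{bounded} generation $\Lambda=(\Lambda' H_1 H_2)^N$, which is Vaserstein's deep theorem and is precisely what converts control on the pieces into a global bound at the limit stage; the paper explicitly lists these as conditions $(1)$--$(3)$ on the quadruple. A proof that used only ``the conjugates of $V$ generate and $b$ is bounded on each'' plus ultraproduct stability would never need Vaserstein's theorem, which is a reliable sign the gap is real and not cosmetic. (A further technical point your sketch glosses over: one must also check the ultraproduct representation is continuous, which is automatic here only because $\mathrm{SL}_n(A)$ is countable discrete.)
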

\begin{prop}$($\cite[Proposition 6.6]{Mim}$)$\label{prop:res3}
Let $B$ be a superreflexive Banach space or a family of them, and let $n\geq 3$. Suppose $\mathrm{SL}_n(A)\geqslant A^{n-1}$ has relative property $(\mathrm{FF}_B)$, where $A^{n-1}$ sits in the same way as Theorem~$\ref{thm:res2}$. If $\mathrm{SL}_n(A)$ moreover satisfies property $(\mathrm{T}_B)$, then $\mathrm{SL}_n(A)$ possesses property $(\mathrm{FF}_B)/\mathrm{T}$.
\end{prop}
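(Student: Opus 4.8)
The plan is to fix an isometric representation $\rho$ of $G=\mathrm{SL}_n(A)$ on $B$ (which, as recalled in this section, we may take uniformly convex and uniformly smooth) and an arbitrary quasi-$\rho$-cocycle $b$ with defect $D:=\sup_{g,h}\|b(gh)-b(g)-\rho(g)b(h)\|<\infty$, and then to bound the projected map $b'\colon G\to B'_{\rho(G)}$. Since $G$ is normal in itself, $B=B^{\rho(G)}\oplus B'_{\rho(G)}$ is a decomposition into $\rho(G)$-invariant subspaces, and composing $b$ with the bounded, $\rho(G)$-equivariant projection onto $B'_{\rho(G)}$ yields a quasi-$\rho'$-cocycle $b'$ whose defect is at most a constant multiple of $D$. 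First I would invoke the standing hypothesis that $\mathrm{SL}_n(A)\geqslant A^{n-1}$ has relative property $(\mathrm{FF}_B)$: applied to $b$, it shows that $b$, and hence $b'$, is bounded on the last-column unipotent subgroup $U:=\{I+\sum_{i<n}a_iE_{in}:a_i\in A\}\cong A^{n-1}$.

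Next I would enlarge this to boundedness on a generating family. By the Suslin stability theorem \cite{Sus}, $G=\mathrm{E}_n(A)$ for $n\geq 3$, so $G$ is generated by the elementary root subgroups, and all of these are conjugate, by finitely many monomial matrices $w_1,\dots,w_r$, to subgroups of $U$. The quasi-cocycle identity then propagates the bound on $U$ to each $w_jUw_j^{-1}$, with a constant depending on the fixed elements $w_j$ and on $D$ but not on the group element; hence $b'$ is bounded along $U$ and along finitely many conjugates $U_j:=w_jUw_j^{-1}$, and these together generate $G$.

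The heart of the argument is to upgrade this to global boundedness of $b'$, and this is exactly where property $(\mathrm{T}_B)$ enters; the step is the quasi-cocycle analogue of Shalom's machinery used in Theorem~\ref{thm:res2}, with $(\mathrm{T}_B)$ playing the role that stability under ultraproducts (and reduced cohomology, in the spirit of \cite{Gr},\cite{Shal4}) plays there. Property $(\mathrm{T}_B)$ supplies a Kazhdan pair $(Q,\kappa)$ for $\rho'$ on $B'_{\rho(G)}$, that is, $\max_{s\in Q}\|\xi-\rho'(s)\xi\|\geq\kappa\|\xi\|$ for all $\xi$. Exploiting uniform convexity, this spectral gap should produce an averaging operator $P=\sum_{s\in Q}\mu(s)\rho'(s)$ of norm $\|P\|<1$ on $B'_{\rho(G)}$, so that the averaged affine map $T\xi:=\sum_{s\in Q}\mu(s)(\rho'(s)\xi+b'(s))$ is a genuine contraction with a unique fixed point $\xi^{*}$. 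The plan is then to show that $\xi^{*}$ is in fact an almost-fixed point of the whole affine quasi-action, so that $b'(g)$ stays within bounded distance of the coboundary $\xi^{*}-\rho'(g)\xi^{*}$; since a coboundary is bounded, this gives $\sup_{g}\|b'(g)\|<\infty$, which is property $(\mathrm{FF}_B)/\mathrm{T}$.

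I expect this last step to be the main obstacle. Stationarity of $\xi^{*}$ under the average $P$ does not by itself bound the displacements $b'(g)-(\xi^{*}-\rho'(g)\xi^{*})$: this is precisely the reduced-versus-unreduced cohomology gap that already prevents $(\mathrm{T}_B)$ from implying $(\mathrm{F}_B)$ in general, and it is why the naive route of inserting $\xi=b'(g)$ into the Kazhdan inequality fails, the quantity $\|b'(g)-\rho'(s)b'(g)\|$ being governed by the \emph{left}-translation increment $\|b'(sg)-b'(g)\|$, which boundedness on $U$ does not control. Closing this gap is exactly where the relative $(\mathrm{FF}_B)$ along the unipotent radical must be fed back in: the boundedness of $b'$ along $U$ and the $U_j$, together with the commutation relations among the root subgroups of $\mathrm{SL}_n$ (which force these restrictions to be mutually compatible, rather than merely generating), should force $\xi^{*}$ to be genuinely almost-fixed. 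Carrying this out while preventing the defect $D$ from accumulating under iteration of the root relations is the main technical difficulty, and it is here that the superreflexivity of $B$, i.e.\ its uniform convexity and uniform smoothness, is essential.
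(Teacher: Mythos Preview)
Your diagnosis of the difficulty is accurate, but the route you propose does not close the gap, and the argument in \cite{Mim} proceeds quite differently. Two concrete problems with your plan: first, in a general superreflexive $B$ there is no reason for an averaging operator $P=\sum_{s}\mu(s)\rho'(s)$ to satisfy $\|P\|<1$ merely from $\rho'\not\succ 1_G$; that contraction is a Hilbert-space phenomenon and is not available here. Second, even granting such a $P$, you yourself correctly note that the fixed point $\xi^{*}$ only controls an \emph{averaged} displacement, and nothing in your outline promotes this to a pointwise bound on $b'(g)-(\xi^{*}-\rho'(g)\xi^{*})$; the appeal to ``commutation relations among root subgroups'' is not a mechanism.

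What you are missing is exactly the structure the paper singles out immediately after stating the proposition: the quadruple $(\Lambda,\Lambda',H_1,H_2)=(\mathrm{SL}_n(A),\mathrm{SL}_{n-1}(A),A^{n-1},A^{n-1})$, with $\Lambda'$ in the upper-left corner and $H_1,H_2$ the last-column and last-row unipotents, together with Vaserstein's theorem that $\Lambda$ is \emph{boundedly generated} by $\Lambda'\cup H_1\cup H_2$. The argument then runs as follows. Relative $(\mathrm{FF}_B)$ bounds $b'$ on $H_1$ and on $H_2$. For $g\in\Lambda'$ and $h\in H_i$, compare
\[
b'(hg)\approx b'(h)+\rho'(h)b'(g)\quad\text{with}\quad b'(hg)=b'\bigl(g\cdot g^{-1}hg\bigr)\approx b'(g)+\rho'(g)b'(g^{-1}hg);
\]
since $\Lambda'$ normalizes $H_i$ one has $g^{-1}hg\in H_i$, so both $b'(h)$ and $b'(g^{-1}hg)$ are uniformly bounded, giving $\|\rho'(h)b'(g)-b'(g)\|\leq C$ for all $g\in\Lambda'$ and all $h\in H_1\cup H_2$. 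Were $\|b'(g_n)\|\to\infty$ along $\Lambda'$, the unit vectors $b'(g_n)/\|b'(g_n)\|$ would be almost $\rho'$-invariant under $H_1\cup H_2$, hence (as $H_1$ and $H_2$ generate $G$) under every finite subset of $G$, contradicting property $(\mathrm{T}_B)$. Thus $b'$ is bounded on $\Lambda'$ as well, and bounded generation together with the quasi-cocycle relation then bounds $b'$ on all of $G$. Note that $(\mathrm{T}_B)$ is used only qualitatively, via the nonexistence of almost invariant vectors in $B'_{\rho(G)}$; no spectral contraction or fixed point is required.
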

Indeed, for the last two of three, we need a quadruple $(\Lambda, \Lambda ',H_1,H_2)$ of a countable discrete group $\Lambda$ with finite abelianization and subgroups $\Lambda',H_1,H_2$ therein satisfying the following three conditions: 
\begin{enumerate}[$($$1$$)$]
   \item The group $\Lambda$ is generated by $H_1$ and $H_2$ together.
   \item The subgroup $\Lambda'$ normalizes $H_1$ and $H_2$.
   \item The group $\Lambda$ is boundedly generated by $\Lambda'$, $H_1$, and $H_2$.
\end{enumerate}
Here we say a subset $\mathcal{S} \subset \Lambda$ containing the unit $e\in \Lambda$ \textit{boundedly generates} a group $\Lambda$ if there exists $N\in \mathbb{Z}_{>0}$ such that $\mathcal{S}^N=\Lambda$ holds (this equality means, any $g\in \Lambda$ can be expressed as a product of $N$ elements in $\mathcal{S}$). We warn that in some other literature, the terminology \textit{bounded generation} is used only for the following \textit{confined} case: $\mathcal{S}$ is a finite union of cyclic subgroups of $\Lambda$. These properties relate to some forms of \textit{Shalom properties}, which are used in the proofs of Theorem~\ref{thm:res2} and Proposition~\ref{prop:res3}. For more details, compare \cite[Definition~5.4, Definition~6.5]{Mim}.

We get these last two results stated above by letting $(\Lambda, \Lambda',H_1,H_2)$ in the original statements be $(\mathrm{SL}_n(A), \mathrm{SL}_{n-1}(A),A^{n-1},A^{n-1})$. Here $\Lambda'$ sits in the left upper corner of $\mathrm{SL}_n(A)$; $H_1\cong A^{n-1}$ sits in $\mathrm{SL}_n(A)$ as a unipotent part, from  $(1,n)$-th to $(n-1,n)$-th entries; and $H_2 \cong A^{n-1}$ sits in $\mathrm{SL}_n(A)$ as  a unipotent part, from  $(n,1)$-th to $(n,n-1)$-th entries. Condition $(3)$ for this case follows from a deep theorem of Vaserstein \cite{Vas}.

First, we deal with the case of that condition $(i)$ is fulfilled. Then by combining Theorem~\ref{thm:res1} and Theorem~\ref{thm:res2}, we obtain that $\mathrm{SL}_{n\geq 4}(A)$ has property $(\mathrm{F}_{\mathcal{B}})$ (note that $\mathrm{SL}_m(A)\ltimes A^m$ naturally injects into $\mathrm{SL}_{m+1}(A)$). Since property $(\mathrm{F}_{\mathcal{B}})$ implies property $(\mathrm{T}_{\mathcal{B}})$ (it is a general fact. See \cite[Theorem 1.3]{BFGM}), $\mathrm{SL}_{n\geq 4}(A)$ has property $(\mathrm{T}_{\mathcal{B}})$. Then by Proposition~\ref{prop:res3}, we obtain property $(\mathrm{FF}_{\mathcal{B}})/\mathrm{T}$ as well.

Secondly, we consider the case of that condition $(ii)$ is satisfied. In general, we may not apply Theorem~\ref{thm:res2}. However in this case, we can first apply Theorem~\ref{thm:res1}, and next appeal directly to Proposition~\ref{prop:res3}. The point here is that since relative $(\mathrm{T})$ implies relative $(\mathrm{T}_{\mathcal{B}})$, the pairs $\mathrm{E}_2(A)\ltimes A^2$$\trianglerighteq A^2$; and $\mathrm{SL}_{n}(A)$$\trianglerighteq \mathrm{SL}_{n}(A)$ ($n\geq 3$) have relative $(\mathrm{T}_{\mathcal{B}})$ (these respectively follow from \cite{Shal1} and Theorem~\ref{thm:SV}). Thus for $\mathrm{SL}_{n\geq 4}(A)$, we obtain property $(\mathrm{FF}_{\mathcal{B}})/\mathrm{T}$. Since $\mathrm{SL}_{n\geq 4}(A)$ has the trivial abelinanization, we get $(\mathrm{F}_{\mathcal{B}})$ as well.
\end{proof}

We mention Bader--Furman--Galander--Monod \cite[\S 4.a]{BFGM}) have shown that relative $(\mathrm{T})$ implies relative $(\mathrm{T}_{L^p(\mu)})$ for any $\sigma$-finite measure $\mu$. Moreover, Heinrich \cite{He} has proven the family $\mathcal{L}^p$ satisfies condition $(i)$. Thus although we saw property $(\mathrm{F}_{\mathcal{L}^p})$ is stronger than property $(\mathrm{T})$ for $p$ large enough (recall this from the introduction), for universal lattices with $n\geq 4$ we even obtain property $(\mathrm{FF}_{\mathcal{L}^p})/\mathrm{T}$  (\cite[Theorem 1.5]{Mim}). Also in \cite[Remark 6.7]{Mim}, we have obtained $(\mathrm{TT})/\mathrm{T}$ (=$(\mathrm{FF}_{\mathcal{L}^2})/\mathrm{T}$) for $\mathrm{SL}_{n\geq 3}(A)$. Note that $(\mathrm{TT})/\mathrm{T}$ is strictly stronger than $(\mathrm{T})$ because any non-elementary hyperbolic group, including one with $(\mathrm{T})$, is known to admit an unbounded quasi-cocycle into the left regular representation, see \cite{MMS}.

In Section~\ref{sec:Cp}, we shall see the family $\mathcal{C}_p$ satisfies condition $(ii)$. On a family $\mathcal{B}$ having condition $(i)$ but not satisfying condition $(ii)$, see Section~\ref{sec:concl}. 

\section{Relative property $(\mathrm{T})$ implies relative property $(\mathrm{T}_{\mathcal{C}_p})$}\label{sec:Cp}
We refer to \cite{PX} for comprehensive treatments on noncommutative $L^p$-spaces including $C_p$. First of all, the Clarkson-type inequality (for instance, see\cite[\S 5]{PX}) implies that $C_p$ for any $p(\in (1,\infty))$ is uniformly convex and uniformly smooth. 

Before proceeding to the proof of Proposition~\ref{prop:C_p}, we shortly recall the strategy in \cite[\S 2.e, \S 4.a]{BFGM} of proving the following commutative version of this theorem: property $(\mathrm{T})$ implies  property $(\mathrm{T}_{L^p(\mu)})$ for $\sigma$-finite measure space $\mu$. The keys to their proof are the following two tools:
\begin{enumerate}[Tool $1$.]
   \item  (The \textit{Mazur map}: interpolation between $L^p$-spaces, see \cite{BL}) For $p,r\in (1,\infty)$ and $\sigma$-finite measure, the map $M_{p,r}$$\colon L^p(\mu) \to L^r(\mu)$; $ M_{p,r}(f)=\mathrm{sign}(f)\cdot |f|^{p/r}$
   is a (non-linear) map, and this induces a uniformly continuous homeomorphism between the unit spheres $M_{p,r}\colon $$S(L^{p}(\mu))\to S(L^{r}(\mu))$.
   \item  (The \textit{Banach--Lamperti theorem}: classification of linear isometries on an $L^p$-space, see \cite{FJ}) For any $1<p<\infty$ with $p\ne 2$, any linear isometry $V$ of $L^p(X,\mu)$ has the form 
   $$
   Vf(x)=f(F(x))h(x)\left( \frac{dF_{*}\mu}{d\mu}(x)\right) ^{\frac{1}{p}}, 
   $$
   where $F$ is a measurable, measure class preserving map of a Borel space $(X,\mu)$, and $h$ is a measurable function with $|h(x)|=1$ almost everywhere.
\end{enumerate}

Their proof goes as follows: suppose a group $\Lambda$ does not have property $(\mathrm{T}_{L^p(\mu)})$. Then there exists a (continuous) isometric linear representation $\rho$ on $B=L^p(\mu)$ such that $\rho ' \succ 1_{\Lambda}$ (, namely, $\rho '$ has almost invariant vectors). Here $\rho '$ is the restriction of $\rho$ on the subspace $B':=B'_{\rho (\Lambda)}$, recall the definitions above from Section~\ref{sec:criter}. Through Tool $1$, define $\pi$ by $\pi (g)=M_{p,2}\circ \rho (g) \circ M_{2,p}$ $(g\in \Lambda)$. Then thanks to Tool 2, one can show this $\pi$ maps each $g\in \Lambda$ to a \textit{linear} (unitary) operator on the Hilbert space $\mathfrak{H}:=L^2(\mu)$. Thus one obtains the unitary representation $\pi \colon$$\Lambda \to \mathcal{U}(\mathfrak{H})$. Finally, by uniform continuity of the Mazur maps, it is not difficult to see that $\rho ' \succ 1_{\Lambda}$ implies $\pi ' \succ 1_{\Lambda}$ (, where $\pi '$ is the restriction of $\pi$ on the orthogonal complement of $\mathfrak{H}^{\pi (\Lambda)}$). This means $\Lambda$ does not have property $(\mathrm{T})$. Hence property $(\mathrm{T})$ implies property $(\mathrm{T}_{L^p(\mu)})$. Also in this argument one can lead the same conclusion for relative properties. 

\begin{proof}(\textit{Proposition~\ref{prop:C_p}})
Puschnigg \cite{Pu} shown a noncommutative  analogue of Tool 1, for the case of $p$-Schatten class $C_p$: 
\begin{thm}$($Puschnigg \cite[Corollary 5.6]{Pu}$)$\label{thm:pu}
Let $1<p,r<\infty$. Then the noncommutative Mazur map
$M_{p,r}\colon S(C_p)\to S(C_r);$ $ a\mapsto u\cdot |a|^{p/r}$
is a uniform continuous homeomorphims between unit spheres. Here $a=u\cdot |a|$ is a polar decomposition. 
\end{thm}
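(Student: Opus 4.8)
The plan is to fix $\alpha:=p/r$ and prove uniform continuity of $M_{p,r}$ directly for every pair $1<p,r<\infty$; since $M_{r,p}$ is manifestly the set-theoretic inverse of $M_{p,r}$ on the spheres (the polar part $u$ is preserved and $(|a|^{p/r})^{r/p}=|a|$), establishing uniform continuity of \emph{all} such maps yields at once that $M_{p,r}$ is a uniformly continuous homeomorphism. I would organize the estimate around three standard noncommutative inputs that are available for $C_p$ with $1<p<\infty$: the noncommutative H\"{o}lder inequality $\|xy\|_s\leq\|x\|_p\|y\|_q$ ($1/s=1/p+1/q$); the Lipschitz property of the modulus, $\|\,|a|-|b|\,\|_p\leq C_p\|a-b\|_p$, valid precisely on the reflexive range $1<p<\infty$; and a Powers--St\o rmer / Birman--Koplienko--Solomyak power inequality $\|x^{s}-y^{s}\|_q\leq\|x-y\|_{sq}^{s}$ for $x,y\geq 0$ and $0<s\leq 1$. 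On the unit sphere the relevant operators are contractions ($\|a\|_\infty\leq\|a\|_p=1$), which I will use repeatedly. The argument then splits according to whether $\alpha\geq 1$ (i.e. $r\leq p$) or $\alpha<1$ (i.e. $r>p$).

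For $\alpha\geq 1$ I would use the identity $M_{p,r}(a)=u|a|^{\alpha}=a|a|^{\alpha-1}$ and write, with $a=u|a|$, $b=v|b|$,
$$
M_{p,r}(a)-M_{p,r}(b)=(a-b)\,|a|^{\alpha-1}+b\,\bigl(|a|^{\alpha-1}-|b|^{\alpha-1}\bigr).
$$
H\"{o}lder with the conjugate exponents $p$ and $p/(\alpha-1)$ (note $1/p+(\alpha-1)/p=\alpha/p=1/r$) bounds the first term by $\|a-b\|_p\,\|\,|a|^{\alpha-1}\,\|_{p/(\alpha-1)}=\|a-b\|_p$, using $\|\,|a|^{\alpha-1}\,\|_{p/(\alpha-1)}=\|a\|_p^{\alpha-1}=1$. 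The second term is bounded by $\|\,|a|^{\alpha-1}-|b|^{\alpha-1}\,\|_{p/(\alpha-1)}$, which is exactly the continuity of the positive power map $x\mapsto x^{\beta}$ ($\beta=\alpha-1$) from the unit ball of $C_p$ into $C_{p/\beta}$; for $\beta\leq 1$ this follows from the power inequality and the modulus-Lipschitz bound, and for $\beta>1$ it is the same case handled on positive operators. This yields (H\"{o}lder-)uniform continuity on this range.

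The case $\alpha<1$ is the crux. Here I would use $M_{p,r}(a)=u|a|^{\alpha}$ and the decomposition
$$
u|a|^{\alpha}-v|b|^{\alpha}=u\bigl(|a|^{\alpha}-|b|^{\alpha}\bigr)+(u-v)\,|b|^{\alpha}.
$$
Since $u$ is a contraction, the first term is at most $\|\,|a|^{\alpha}-|b|^{\alpha}\,\|_r\leq\|\,|a|-|b|\,\|_p^{\alpha}\leq (C_p\|a-b\|_p)^{\alpha}$ by the power inequality with $s=\alpha$, $q=r$ (so $sq=p$) followed by modulus-Lipschitz. The genuine difficulty is the phase term $(u-v)|b|^{\alpha}$, because the polar part $u$ is \emph{not} continuous in $a$. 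The device I would use is to first transfer smallness to the power $1$: from $v|b|=b$, $u|a|=a$ one gets $(u-v)|b|=u(|b|-|a|)+(a-b)$, whence $\|(u-v)|b|\|_p\leq (C_p+1)\|a-b\|_p$. It then remains to pass from the power $1$ to the power $\alpha$, for which I would prove the interpolation inequality $\|w|b|^{\alpha}\|_r\leq\|w\|_\infty^{1-\alpha}\,\|w|b|\|_p^{\alpha}$ with $w=u-v$ (so $\|w\|_\infty\leq 2$), via Stein complex interpolation applied to the analytic family $z\mapsto w|b|^{z}$ between $\mathbb{B}(\mathfrak{H})$ and $C_p$: on $\mathrm{Re}\,z=0$ one has $\|w|b|^{it}\|_\infty\leq\|w\|_\infty$ (as $|b|^{it}$ is a contraction on its support), on $\mathrm{Re}\,z=1$ one has $\|w|b|^{1+it}\|_p\leq\|w|b|\|_p$, and $[\mathbb{B}(\mathfrak{H}),C_p]_{\alpha}=C_{p/\alpha}=C_r$ gives the claim at $z=\alpha$. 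Combining, $\|(u-v)|b|^{\alpha}\|_r\leq 2^{1-\alpha}\bigl((C_p+1)\|a-b\|_p\bigr)^{\alpha}$, so $M_{p,r}$ is H\"{o}lder continuous with exponent $\alpha$.

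I expect the main obstacle to be precisely this phase reattachment for $\alpha<1$: controlling $(u-v)|b|^{\alpha}$ despite the discontinuity of the polar part. The complex-interpolation inequality above is what makes it clean, and verifying its hypotheses (analyticity and the two boundary bounds for the family $w|b|^z$, together with the identification of the interpolation space) is the technical heart; the power inequality and the modulus-Lipschitz estimate, while also nontrivial, can be invoked as known facts about Schatten classes. Assembling the two cases gives uniform continuity of every $M_{p,r}$, and the mutual-inverse relation between $M_{p,r}$ and $M_{r,p}$ then upgrades this to the asserted uniformly continuous homeomorphism between $S(C_p)$ and $S(C_r)$.
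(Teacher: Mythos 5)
Your proposal is essentially correct, but it necessarily takes a different route from the paper, because the paper contains no proof of this statement at all: it is quoted verbatim as Puschnigg's Corollary 5.6 \cite{Pu}, and, as Remark~\ref{rem:typeII} indicates, the argument behind it runs through a Kosaki-type convexity inequality \cite[Proposition 7]{Kos} (a Powers--St\o rmer-flavoured estimate), yielding uniform continuity without explicit moduli. Your argument instead produces quantitative H\"older exponents, and in fact it reproduces the strategy later published by \'E.~Ricard (\emph{H\"older estimates for the noncommutative Mazur map}, Arch.\ Math.\ \textbf{104} (2015)): the rewriting $M_{p,r}(a)=a|a|^{\alpha-1}$ plus H\"older for $\alpha\geq 1$, and, for $\alpha<1$, the transfer of the phase error to power one via $(u-v)|b|=u(|b|-|a|)+(a-b)$ followed by the interpolation inequality $\|w|b|^{\alpha}\|_r\leq\|w\|_{\infty}^{1-\alpha}\|w|b|\|_p^{\alpha}$; that inequality is true and is precisely Ricard's lemma, provable by a three-lines argument (pair $w|b|^{z}$ against finite-rank elements, use $\||b|^{z}\|_{\infty}\leq 1$ on the closed strip since $\||b|\|_{\infty}\leq\||b|\|_p=1$, and define $|b|^{it}$ as a partial isometry on the support of $|b|$, so no abstract identification of $[\mathbb{B}(\mathfrak{H}),C_p]_{\alpha}$ is even needed). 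The remaining debts you correctly flag are genuine but standard: Davies' Lipschitz estimate for $a\mapsto|a|$ on $C_p$, $1<p<\infty$, the Birman--Koplienko--Solomyak/Ando power inequality (valid in all Schatten norms since $t\mapsto t^{s}$, $0<s\leq 1$, is operator monotone), and the small check that $M_{p,r}$ is independent of the choice of $u$ (any two choices agree on $\overline{\mathrm{ran}}\,|a|$, which absorbs the range of $|a|^{p/r}$), together with the observation that $M_{r,p}\circ M_{p,r}=\mathrm{id}$ because $|u|a|^{p/r}|=|a|^{p/r}$ with the same phase. The trade-off: the paper's citation route is shorter and, via Kosaki, extends to general semifinite von Neumann algebras (which the paper exploits in Theorem~\ref{thm:pusc}), while yours is self-contained for $C_p$ and delivers explicit H\"older continuity, which is strictly more than the theorem asserts.
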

In \cite{Pu}, he considered isometric linear representations on $C_p$ which come from unitary representations, and hence he did not need an analogue of Tool 2. For our case, it is needed, and is obtained by J. Arazy \cite{Ara}. Since our Hilbert space $\mathfrak{H}$ is separable, by choosing a countable orthonormal basis, we can identify $\mathfrak{H}$ with a square integrable sequence space $\ell^2$. Through this identification, we can consider the \textit{transpose map}; $ a\mapsto {}^{T}a$ on $\mathbb{B}(\mathfrak{H})\cong \mathbb{B}(\ell^2)$. Although the transpose map depends on the choices of bases, it is easy to obtain the following facts:

\begin{lem}\label{lem:trans}
Stick to the setting in above.
\begin{enumerate}[$(i)$]
  \item The transpose map is a linear isometry on each $C_p$.
  \item The transpose map is compatible with the adjoint operation. Namely, for any $a\in \mathbb{B}(\ell^2)$, 
    $
    ({}^Ta)^{*} ={}^T(a^{*})
    $.
  \item For any $a,b\in \mathbb{B}(\ell^2)$, ${}^T(ab)={}^Tb{}^Ta$.
  \item If $u$ is a unitary, then so is ${}^T u$.
  \item If $t$ is positive and $\alpha>0$, then ${}^T t$ is positive and ${}^T(t^{\alpha})=({}^Tt)^{\alpha}$.
\end{enumerate}
\end{lem}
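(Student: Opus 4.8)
The plan is to organize every item around the canonical conjugation attached to the chosen basis. Fix the identification $\mathfrak{H}\cong \ell^2$ and let $J$ be the antilinear isometric involution determined on the orthonormal basis $(\xi_i)_i$ by $J(\sum_i c_i \xi_i)=\sum_i \overline{c_i}\,\xi_i$. A short computation on matrix entries shows that the transpose has the intrinsic description ${}^Ta = Ja^{*}J$ for every $a\in\mathbb{B}(\ell^2)$, equivalently $({}^Ta)_{ij}=a_{ji}$. Since $J$ is antiunitary with $J^2=I$, the map $a\mapsto Ja^{*}J$ is complex-linear, and from $\|Ja^{*}J\|=\|a^{*}\|=\|a\|$ it is an isometry for the operator norm. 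This formula, together with the relation $\langle Jx\mid Jy\rangle=\langle y\mid x\rangle$, supplies everything needed for the algebraic items.

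For item $(iii)$ I would compute ${}^T(ab)=J(ab)^{*}J=Jb^{*}a^{*}J=(Jb^{*}J)(Ja^{*}J)={}^Tb\,{}^Ta$, inserting $J^2=I$ in the middle. Item $(ii)$ is then immediate from the entrywise description, since both $({}^Ta)^{*}$ and ${}^T(a^{*})$ have $(i,j)$-entry $\overline{a_{ij}}$; alternatively it follows from the identity $(Ja^{*}J)^{*}=JaJ={}^T(a^{*})$, valid for antiunitary $J$. Item $(iv)$ is purely formal: for a unitary $u$, combining $(ii)$ and $(iii)$ gives ${}^Tu\,({}^Tu)^{*}={}^Tu\,{}^T(u^{*})={}^T(u^{*}u)={}^TI=I$, and symmetrically $({}^Tu)^{*}\,{}^Tu=I$, so ${}^Tu$ is unitary.

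For item $(v)$, positivity of ${}^Tt$ follows by writing $t=s^{*}s$ and applying $(ii)$ and $(iii)$: ${}^Tt={}^T(s^{*}s)={}^Ts\,{}^T(s^{*})={}^Ts\,({}^Ts)^{*}\geq 0$. For the functional-calculus identity ${}^T(t^{\alpha})=({}^Tt)^{\alpha}$, anti-multiplicativity applied to powers of a single self-adjoint element gives ${}^T(t^{m})=({}^Tt)^{m}$ for every integer $m\geq 0$ (the reversal of order is invisible for powers of one element). Since the transpose is an operator-norm isometry, hence norm-continuous, and $x^{\alpha}$ is a uniform limit of polynomials on the compact spectrum $[0,\|t\|]$ by Weierstrass, passing to the limit yields ${}^T(t^{\alpha})=({}^Tt)^{\alpha}$. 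This is the one point that needs slightly more than formal manipulation, precisely because $\alpha$ need not be an integer.

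Finally, for item $(i)$, linearity is clear from the entrywise description, so it remains to verify $\|{}^Ta\|_p=\|a\|_p$, i.e. that $a$ and ${}^Ta$ share the same singular values. Using $(ii)$, $(iii)$ and $(v)$ I would compute $|{}^Ta|^{2}=({}^Ta)^{*}\,{}^Ta={}^T(aa^{*})={}^T(|a^{*}|^{2})$ and take square roots to obtain $|{}^Ta|={}^T|a^{*}|$. Invariance of the trace under transpose, $\mathrm{Tr}({}^Tb)=\sum_i b_{ii}=\mathrm{Tr}(b)$, then gives $\mathrm{Tr}(|{}^Ta|^{p})=\mathrm{Tr}({}^T(|a^{*}|^{p}))=\mathrm{Tr}(|a^{*}|^{p})$, while the polar decomposition $a=u|a|$ yields $|a^{*}|=u|a|u^{*}$ and hence $\mathrm{Tr}(|a^{*}|^{p})=\mathrm{Tr}(|a|^{p})$. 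Thus $\|{}^Ta\|_p=\|a\|_p$ and the transpose restricts to a linear isometry of each $C_p$. The main obstacle is not a single hard estimate but keeping the bookkeeping straight; the only genuinely analytic steps are the Weierstrass approximation in $(v)$ and the singular-value comparison of $a$ and $a^{*}$ in $(i)$.
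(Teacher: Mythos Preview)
Your proof is correct in every item; the conjugation formula ${}^Ta=Ja^{*}J$ is a clean way to package the bookkeeping, and the Weierstrass/functional-calculus step for non-integer $\alpha$ and the singular-value comparison for the $C_p$-norm are handled properly. The paper itself gives no proof of this lemma---it merely says ``it is easy to obtain the following facts'' and moves on---so there is nothing to compare your argument against.
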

Now we state the theorem of Arazy:
\begin{thm}$($Arazy \cite{Ara}$)$\label{thm:Ara}
Let $1<p<\infty$ with $p\ne 2$ and $C_p$ be the space of $p$-Schatten class operators on $\ell^2$. Then every linear isometry $V$ on $C_p$ is either of the following two forms:
  \begin{enumerate}[$($$1$$)$]
    \item there exist unitaries $w,v\in \mathcal{U}(\ell^2)$ such that $V\colon$ $a\mapsto w a v$;
    \item there exist unitaries $w,v\in \mathcal{U}(\ell^2)$ such that $V\colon$ $a\mapsto w  {}^T av$.
\end{enumerate}
\end{thm}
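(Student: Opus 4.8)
The plan is to classify the surjective linear isometries of $C_p$ through their infinitesimal data, namely the \emph{Hermitian operators} on $C_p$, following Arazy. Recall that a bounded operator $H$ on a Banach space $X$ is called Hermitian if $\exp(itH)$ is a surjective isometry of $X$ for every $t\in\mathbb{R}$; these form a real linear space $\mathcal{H}(X)$, and a surjective isometry $V$ acts on it by conjugation $H\mapsto VHV^{-1}$. First I would record the obvious supply of Hermitian operators on $C_p$: for bounded self-adjoint $h,k\in\mathbb{B}(\ell^2)$ set $L_{h,k}\colon a\mapsto ha+ak$. Since left and right multiplications commute, $\exp(itL_{h,k})$ is exactly $a\mapsto e^{ith}\,a\,e^{itk}$, an isometry of $C_p$ by unitary invariance of $\|\cdot\|_p$; hence $L_{h,k}\in\mathcal{H}(C_p)$. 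Since the maps in forms (1) and (2) are manifestly surjective, it suffices to treat surjective isometries, which is also the only case needed for the representations occurring later, each $\rho(g)$ being invertible.

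The crux, and the step I expect to be by far the hardest, is the converse: for $p\neq 2$, \emph{every} $H\in\mathcal{H}(C_p)$ has the form $L_{h,k}$. The tool is the characterization that $H$ is Hermitian if and only if $f(Ha)\in\mathbb{R}$ for every $a$ on the unit sphere and its norming functional $f$, which is unique by uniform smoothness of $C_p$. Using $C_p^*=C_{p'}$ (with $1/p+1/p'=1$) and the explicit duality map, which represents the norming functional of $a=u|a|$ via the trace pairing $\langle b,a\rangle=\mathrm{Tr}(b^*a)$ by the element $\|a\|_p^{1-p}\,u|a|^{p-1}\in C_{p'}$, the Hermitian condition becomes the family of trace identities $\mathrm{Tr}\big((Ha)^*\,u|a|^{p-1}\big)\in\mathbb{R}$. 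I would then test these against finite-rank $a$ built from matrix units and diagonal operators, differentiate the resulting scalar functions of the eigenvalues of $|a|$, and extract that $H$ must act by left and right multiplication; the exponent $p-1\neq 1$, that is $p\neq 2$, is precisely what forbids the extra solutions present for $C_2$. Care is needed to pass from finite-rank test elements to the whole space and to verify that the self-adjoint coefficients $h,k$ so obtained are genuinely bounded, with the scalar ambiguity $(h,k)\sim(h+c1,k-c1)$ accounted for.

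Granting the classification of $\mathcal{H}(C_p)$, the isometries follow algebraically. The space $\mathcal{H}(C_p)=\{L_{h,k}\}$ carries, through commutators of the associated one-parameter groups, the structure of a direct sum of two commuting ideals, the \emph{left} multiplications $\{L_{h,0}\}$ and the \emph{right} multiplications $\{L_{0,k}\}$, each isomorphic to the self-adjoint part of $\mathbb{B}(\ell^2)$ modulo scalars. A surjective isometry $V$ induces an automorphism of this structure that must either preserve the two ideals or interchange them. In the preserving case, conjugation by $V$ sends left (respectively right) multiplications to left (respectively right) multiplications, which forces $V$ to be implemented by unitaries $w,v$ as $a\mapsto wav$, that is, form (1); in the swapping case one precomposes with the transpose map, under which $a\mapsto ha$ becomes $a\mapsto a\,{}^Th$ by Lemma~\ref{lem:trans}(iii), reducing to the preserving case and yielding form (2), $a\mapsto w\,{}^Ta\,v$.

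Finally I would pin down $V$ exactly. Having matched $V$ with a candidate isometry $V_0$ of the asserted form on all Hermitian generators, the operator $V_0^{-1}V$ commutes with every $L_{h,k}$, hence with all left and right multiplications, which forces it to be a scalar; absorbing this scalar into $w$ or $v$ completes the proof. The only genuinely analytic input is the middle step; the remainder is the representation theory of the resulting Lie structure together with the elementary properties of the transpose recorded in Lemma~\ref{lem:trans}.
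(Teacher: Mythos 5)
First, a point of comparison that matters for this review: the paper contains no proof of this statement at all --- Theorem~\ref{thm:Ara} is imported verbatim from Arazy's 1975 paper and used as a black box (``Tool~2'' in the scheme of Section~\ref{sec:Cp}), so your attempt can only be measured against Arazy's argument and the surrounding literature, not against anything in the text. Your overall strategy --- compute the Hermitian operators $\mathcal{H}(C_p)$ via the unique norming functional $\|a\|_p^{1-p}u|a|^{p-1}$, then recover the isometries from the conjugation action on $\mathcal{H}(C_p)$ --- is a recognized route to exactly this theorem (it is the hermitian-operator method, as in Sourour's work on norm ideals and Fleming--Jamison's account). Moreover, your surjectivity caveat is a genuine and correct catch: as literally stated the theorem fails for non-surjective isometries, e.g.\ $a\mapsto sa$ with $s$ the unilateral shift satisfies $|sa|=|a|$ and so is a linear isometry of $C_p$ of neither form $(1)$ nor $(2)$, both of which are bijective. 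Surjectivity is exactly what holds in the paper's application, where $V=\rho(g)$ is invertible, and the non-surjective case is precisely why Yeadon's theorem (Theorem~\ref{thm:ye}), with its partial isometry and Jordan monomorphism, is imported later in Section~\ref{sec:fi}.

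As a proof, however, the proposal is incomplete at its two load-bearing joints. (a) The classification $\mathcal{H}(C_p)=\{L_{h,k}\}$ for $p\ne 2$ \emph{is} the analytic content of the theorem, and your sketch (``test against matrix units, differentiate, extract'') stops exactly where the work begins: one must differentiate the trace identities through points where the functional calculus $a\mapsto u|a|^{p-1}$ is not smooth, handle coincident and vanishing singular values, pass from finite-rank test elements to a closed formula, and verify boundedness of $h$ and $k$; none of this is indicated beyond the remark that $p-1\ne 1$ excludes the extra Hermitians present at $p=2$. (b) The dichotomy ``the induced automorphism preserves or interchanges the two ideals'' does not follow from generic structure theory in infinite dimensions: $\{L_{h,0}\}$ modulo scalars is \emph{not} simple (the compact self-adjoint $h$ give a proper ideal), and furthermore $\{L_{h,0}\}\cap\{L_{0,k}\}=\mathbb{R}\cdot\mathrm{id}$, so the ``direct sum of two commuting ideals'' only holds modulo the center and the preserve-or-swap claim needs an actual argument --- the published proofs obtain it concretely, via the behavior of the isometry on rank-one operators and hermitian projections, not from abstract Lie theory. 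The closing step (the commutant of all maps $a\mapsto uav$, $u,v$ unitary, consisting of scalars) is fine modulo a short density argument. In sum: correct strategy, a valuable precision about surjectivity, but the two decisive steps are asserted rather than proven, so this is an outline of Arazy's theorem rather than a proof of it.
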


Thanks to the two theorems above, by following footsteps of Bader--Furman--Gelander--Monod we accomplish the conclusion. Indeed, for linearity of the composition, let us take a linear isometry $V$ on $C_p$. By Lemma~\ref{lem:trans}, for $\widetilde{V}=M_{p,2}\circ V\circ M_{2,p} $$\colon S(C_2)\to S(C_2)$ we have the following (recall that one can take a partial isometry in polar decomposition as a unitary):
\begin{enumerate}[$(i)$]
   \item in the case of $(1)$, for any $x\in C_2$ with a polar decomposition $x=u|x|$, a polar decomposition of $w u|x|^{2/p}v$ is $(wuv)(v^{*}|x|^{2/p}v)$. Therefore, we have 
   \begin{align*}
   \widetilde{V}\cdot x&= wuv (v^{*}|x|^{2/p}v)^{p/2} =wuv v^{*}|x|v \\
                       &= wu|x|v=wxv.
   \end{align*}
   \item in the case of $(2)$, for any $x\in C_2$ with a polar decomposition $x=u|x|$, a polar decomposition of $w {}^T(u|x|^{2/p})v$$= w ({}^T|x|)^{2/p}({}^Tu)v$ is 
   $$
   (w{}^Tuv)\{v^{*}({}^Tu)^{*}({}^T|x|)^{2/p}({}^T u)v\}. 
   $$
   Therefore, we have 
   \begin{align*}
   \widetilde{V}\cdot x&= w {}^T u  v \{v^{*}({}^Tu)^{*}({}^T|t|)^{2/p}({}^Tu)v\}^{p/2} \\
                       &=w {}^Tu v v^{*}({}^Tu)^{*}({}^T|x|){}^Tuv \\
                       &= w{}^T|x|{}^Tuv =w {}^T(u|x|)v=w({}^Tx)v.
   \end{align*}
\end{enumerate}

Hence in both cases $\widetilde{V}$ is linear. 
Now recall that $C_2$, the space of Hilbert--Schmidt operators, is indeed a Hilbert space equipped with the inner product: $\langle x\mid y\rangle :=\mathrm{Tr}(y^{*}x)$ (this holds for noncommutative $L^2$ spaces in general setting). 
\end{proof}

Proposition~\ref{prop:C_p} together with Theorem~\ref{thm:criter} immediately implies Theorem~\ref{thm:Cp} for universal lattices and group quotients of them. In Section~\ref{sec:fi}, we deal with the case of finite index subgroups by utilizing $p$-inductions.

\section{\textbf{Finite index subgroups, Lattices, and $L^p$-inductions}}\label{sec:fi}
The $p$-induction of (quasi-)cocycles is given by Shalom \cite[\S 3. III]{Shal3} for $p=2$, and later by Bader--Furman--Gelander--Monod \cite[\S 8]{BFGM} for general $p$. 

Let $G$ be a (locally compact second countable) group, $\Gamma\leqslant G$ 
be a lattice, and $\mathcal{D}$ be a Borel fundamental domain for $\Gamma$ (namely, $\mathcal{D}$ is a Borel subset of $G$ such that $G=\bigsqcup_{\gamma\in \Gamma}\mathcal{D}\gamma$). For the existence of such a domain, see \cite[\S B]{BHV}. We let $\mu$ be a Haar measure of $G$ with $\mu(\mathcal{D})=1$; and we identify $\mathcal{D}$ with $G/\Gamma$ and regard $\mathcal{D}$ as a (left) $G$-space. We define a Borel cocycle $\beta \colon G\times \mathcal{D} \to \Gamma$ by the following rule:
$$
\beta (g,d)=\gamma \textrm{ if and only if }g^{-1}d\gamma \in \mathcal{D}.
$$
Now for given isometric $\Gamma$-representation $\sigma $ on a Banach space $B$ and a (quasi-)$\sigma$-cocycle $b\colon \Gamma \to B$, under some integrability condition in below, we can define the $p$\textit{-induction} $\tilde{b}$ of $b$ as follows:
\begin{itemize}
  \item the $p$-induced Banach space is $L^p(\mathcal{D},B)$, the space of $p$-Bochner integrable functions;
  \item the $p$-induced representation $\rho=\mathrm{Ind}_{\Gamma}^G \sigma$ in $L^p(\mathcal{D},B)$ is defined as follows: for $g\in G$, $\xi \in L^p(\mathcal{D},B)$ and $d\in \mathcal{D}$, 
  $$
  \rho (g)\xi(d):= \sigma (\beta (g,d)) \xi (g^{-1}\cdot d);
  $$
  \item we define the $\tilde{b}\colon G\to L^p(\mathcal{D},B)$ as 
  $$
   \tilde{b}(g)(d):=b(\beta (g,d)) \ \ (g\in G,\ d\in \mathcal{D}), 
  $$
  \textit{provided that $\tilde{b}(g)$ is $p$-integrable for all $g\in G$.}
\end{itemize}
Then under the condition above, this $\tilde{b}$ becomes a (quasi-)$\mathrm{Ind}_{\Gamma}^G \sigma$-cocycle.

\begin{rem}\label{rem:int}
The $p$-induction procedure of (quasi-)cocycles requires the integrable condition above, and in general it is a subtle problem to determine whether this holds. However, it is known that this condition is satisfied for all $p$ in the following two cases:
\begin{enumerate}[$(1)$]
  \item if $\Gamma$ is cocompact in $G$, in particular, if $\Gamma$ is a finite index subgroup of (a countable) $G$, then for any $\mathcal{D}$ and any $b$ the integrability condition is fulfilled;
  \item if $G$ is a semisimple algebraic group with each simple factor having a local rank at most $2$, then there exists $\mathcal{D}$ such that for any $b$ the integrability condition is fulfilled.
\end{enumerate}
For case $(1)$, it is almost trivial to check the integrability. However for case $(2)$, a proof of this fact is considerably involved: one needs a deep result on length functions on higher rank lattices \cite{LMR}, see \cite[\S 2]{Shal2} how to deduce the integrability.
\end{rem}

Now we state the strategy to prove Theorem~\ref{thm:Cp} for finite index subgroups of universal lattices; and Theorem~\ref{thm:highrank}. At the beginning, we consider a general setting. Let $G$ be a (locally compact and second countable) group and $\Gamma$ be a lattice in $G$. Let $\sigma$ be an isometric $\Gamma$-representation in $C_p$. Take a Borel fundamental domain $(\mathcal{D},\mu)$ of $\Gamma$ and consider the $p$-induction (with the same $p$ as $C_p$) $\rho= \mathrm{Ind}_{\Gamma}^G \sigma$ of $\sigma$ on $E=L^p(\mathcal{D},C_p)$. Note that $E$ is identical to the following space:
$$
\{ x\colon \mathcal{D}\to C_p: \| x\|_p^p:=\int_{d\in \mathcal{D}}\mathrm{Tr}(|x(d)|^{p}) d\mu <\infty\}.
$$

We need to have analogues of Tool $1$ and Tool $2$ in Section~\ref{sec:Cp}. Note that $E=L^p(\mathcal{D},C_p)$ can be seen as the noncommutative $L^p$-space associated with the von Neumann algebra $L^{\infty}(\mathcal{D},\mathbb{B}(\mathfrak{H}))$$\cong L^{\infty}(\mathcal{D})\otimes \mathbb{B}(\mathfrak{H})$ (acting on $L^{2}(\mathcal{D})\otimes \mathfrak{H}$) with the canonical (normal faithful) semifinite trace $\tau= \tau_{L^{\infty}} \otimes \mathrm{Tr}$: for $x\in L^{\infty}(\mathcal{D},\mathbb{B}(\mathfrak{H})^+)$, 
$$
\tau (x):= \int_{d\in \mathcal{D}} \mathrm{Tr}(x(d))  d\mu \in [0,\infty].
$$
For the definition of noncommutative $L^p$-space $L_p(\mathcal{A},\tau)$ associated with a semifinite von Neumann algebra $(\mathcal{A},\tau)$, see \cite{PX}. We use this symbol  to distinguish it from $L^p(\mathcal{D},C_p)$ above. 
For Tool $1$, we utilize the following result of Puschnigg: 
\begin{thm}$($Puschnigg, \cite[Corollary 5.7]{Pu}$)$\label{thm:pusc}
For $1<p,r<\infty$, the analogue of the Mazur map $($Theorem~$\ref{thm:pu})$
$$
\tilde{M}_{p,r}\colon L^p(\mathcal{D},C_p) \to L^r(\mathcal{D},C_r); 
x \mapsto u \cdot |x|^{p/r}
$$
is  a uniform continuous homeomorhism on unit spehres. Here $x=u|x|$ is a polar decomposition. 
\end{thm}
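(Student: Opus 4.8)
The plan is to treat $L^p(\mathcal{D},C_p)$ as the single semifinite noncommutative $L^p$-space $L_p(\mathcal{A},\tau)$ with $\mathcal{A}=L^{\infty}(\mathcal{D})\otimes\mathbb{B}(\mathfrak{H})$ and $\tau=\tau_{L^{\infty}}\otimes\mathrm{Tr}$, so that Theorem~\ref{thm:pusc} becomes precisely the extension of Theorem~\ref{thm:pu} from $(\mathbb{B}(\mathfrak{H}),\mathrm{Tr})$ to a general semifinite trace. Write $\theta=p/r$. Since the modulus $|x|=(x^{*}x)^{1/2}$ and the polar decomposition $x=u|x|$ in $\mathcal{A}$ are computed by Borel functional calculus (hence fibrewise in $d\in\mathcal{D}$), the map $\tilde{M}_{p,r}(x)=u|x|^{\theta}$ is well defined on $L_p(\mathcal{A},\tau)$ and agrees fibrewise with the $C_p$-map of Theorem~\ref{thm:pu}. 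First I would record the structural facts: $\tilde{M}_{p,r}$ sends the unit sphere into the unit sphere, because $\||x|^{\theta}\|_r^r=\tau(|x|^{\theta r})=\tau(|x|^p)=\|x\|_p^p$; and it is a bijection with inverse $\tilde{M}_{r,p}$, since $u|x|^{\theta}$ has modulus $|x|^{\theta}$ and the same partial isometry $u$, so that $\tilde{M}_{r,p}(u|x|^{\theta})=u|x|^{\theta\cdot r/p}=u|x|=x$. Everything then reduces to uniform continuity of $\tilde{M}_{p,r}$ and of $\tilde{M}_{r,p}$ on the spheres.

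For the uniform continuity I would separate the two regimes $\theta\le1$ (i.e.\ $r\ge p$) and $\theta\ge1$. The central input consists of the noncommutative inequalities controlling $\||a|^{\theta}-|b|^{\theta}\|$ in terms of $\||a|-|b|\|$, together with a control of the partial isometries. For the concave regime $\theta\le1$ the decisive estimate is a Birman--Koplienko--Solomyak type trace inequality, in the homogeneous form
\begin{equation*}
\|A^{\theta}-B^{\theta}\|_{L_{p/\theta}}\le \|A-B\|_{L_p}^{\theta}\qquad(A,B\ge0,\ 0<\theta\le1),
\end{equation*}
which is a genuine operator statement proved via operator concavity and Araki--Lieb--Thirring techniques. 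The convex regime $\theta\ge1$ is handled by the complementary Lipschitz-type bound, which on the unit sphere (where the relevant norms are controlled) again produces a modulus of continuity; equivalently it is the assertion that $\tilde{M}_{r,p}$ is uniformly continuous. Because $\tilde{M}_{p,r}$ is positively homogeneous of degree $\theta$, these homogeneous fibrewise estimates are exactly of the shape needed for integration: I would raise the $C_r$-estimate to the $r$-th power, integrate over $d\in\mathcal{D}$, and apply H\"older's inequality (using $\int_{\mathcal{D}}\|x(d)\|_{C_p}^{p}\,d\mu=\|x\|_p^p=1$) to pass from the fibrewise modulus of continuity to one for $\tilde{M}_{p,r}$ in the integrated norm. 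This is the step where the passage from $C_p$ (Theorem~\ref{thm:pu}) to $L^p(\mathcal{D},C_p)$ is actually carried out.

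The hard part will be the purely noncommutative operator analysis underlying the fibrewise estimates, which is exactly Puschnigg's contribution. Two difficulties stand out. First, the fractional-power inequalities must be proved without any functional calculus on differences: one cannot express $|a|^{\theta}-|b|^{\theta}$ as a function of $|a|-|b|$, so one must invoke operator monotonicity and concavity of $t\mapsto t^{\theta}$ for $0<\theta\le1$ and the matching convexity machinery for $\theta\ge1$. Second, and more delicate, is the coupling of the phase $u$ and the modulus $|x|$: the map $x\mapsto u|x|^{\theta}$ is far better behaved than the partial isometry $u$ alone, and bounding $\|u\,|a|^{\theta}-v\,|b|^{\theta}\|_r$ for $x=u|a|$, $y=v|b|$ requires estimates tying the partial isometries to their moduli rather than treating the phases in isolation. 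By contrast, once these homogeneous $C_p$-estimates are in hand, the fibrewise reduction and the H\"older integration over $\mathcal{D}$ are routine, so essentially the entire weight of the proof rests on the operator inequalities on a single $C_p$ fibre, which are precisely the content of Theorem~\ref{thm:pu}.
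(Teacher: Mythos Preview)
The paper does not contain its own proof of this statement: it is quoted verbatim as Corollary~5.7 of Puschnigg~\cite{Pu} and then used as a black box. The only internal indication of how one would argue is Remark~\ref{rem:typeII}, where the author notes that for the extension to general semifinite von Neumann algebras one can use an inequality of Kosaki \cite[Proposition~7]{Kos}, ``compar[ing] with the proof of \cite[Corollary~5.6]{Pu}.''

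Your sketch is a plausible reconstruction of such an argument and is broadly correct in spirit. The structural observations (that $\tilde{M}_{p,r}$ carries the unit sphere to the unit sphere and is inverted by $\tilde{M}_{r,p}$) are right, and your identification of the two genuine difficulties---fractional-power trace inequalities for $|a|^{\theta}-|b|^{\theta}$, and the coupling of the phase $u$ with the modulus---matches what actually has to be done. Where you invoke Birman--Koplienko--Solomyak type inequalities, the paper instead points to Kosaki's inequality; these belong to the same circle of operator-concavity estimates and either route can be made to work. One small caveat: your ``fibrewise estimate then integrate via H\"older'' step presupposes that the uniform continuity on $S(C_p)$ comes with a modulus that is a \emph{power} (or at least concave and homogeneous), so that raising to the $r$-th power and integrating is legitimate; this is indeed the form one gets from the operator inequalities, but it is worth making explicit. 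Since the paper gives no proof to compare against, the most one can say is that your outline is consistent with the approach the author signals.
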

Note that the unifrom convexity and the uniform smoothness of $E$ follow from a general theory, see \cite{FiPi}. 

For Tool $2$, F. J. Yeadon has shown the following theorem on isometries in the noncommutative $L^p$-space associated with a semifinite von Neumann algebra: 
\begin{thm}$($Yeadon, \cite[Theorem2]{Yea}$)$\label{thm:ye}
Let $\mathcal{A}$ be a von Neumann algebra with a faithful semifinite normal trace $\tau$ and let $L_p(\mathcal{A},\tau)$ be the associated noncommutative $L^p$-space with $p\ne 2$ in $(1,\infty)$. Suppose $V$ is a linear isometry. Then there exist, uniquely, a partial isometry $w\in \mathcal{A}$,  a normal Jordan $*$-monomorphism, and  an $($unbounded$)$ positive self-adjoint operator $N$ affiliated with $\mathcal{A}\cap J(\mathcal{A})'$ such that 
\begin{itemize}
   \item $w^*w=J(1)=\textrm{the support projection of } N$;
   \item for any $x\in {\mathcal{A}}^{+}$, $\tau (x)=\tau (N^pJ(x))$;
   \item for any $x\in \mathcal{A}\cap L_p(\mathcal{A},\tau )$, $V\cdot x=wNJ(x)$.\end{itemize}
\end{thm}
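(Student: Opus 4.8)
The plan is to exploit that, for $p\neq 2$, the geometry of $L_p(\mathcal{A},\tau)$ is rigid: the norm satisfies a strict noncommutative Clarkson inequality whose equality case detects orthogonality of supports. First I would establish a \emph{disjointness criterion}: for $x,y\in L_p(\mathcal{A},\tau)$,
$$
\|x+y\|_p^p + \|x-y\|_p^p = 2\|x\|_p^p + 2\|y\|_p^p
$$
holds if and only if $x$ and $y$ have mutually orthogonal left supports and mutually orthogonal right supports (equivalently $x^*y=xy^*=0$). This is the operator analogue of the commutative fact that equality in Clarkson forces disjoint supports; one proves it by diagonalizing the absolute values and reducing to the abelian von Neumann subalgebra generated by the relevant spectral projections together with the trace. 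Since $V$ is a linear isometry it preserves both sides of the displayed identity, hence preserves this disjointness relation.

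Next I would transport the support structure through $V$. Restricting to projections $e\in\mathcal{A}$ with $\tau(e)<\infty$ (which lie in $\mathcal{A}\cap L_p(\mathcal{A},\tau)$) and taking the polar decomposition $V(e)=w_e|V(e)|$, I would set $J(e)$ to be the support projection of $|V(e)|$ and record $|V(e)|$ as the local value of the density. Disjointness-preservation forces $J$ to send orthogonal projections to orthogonal projections and to be additive on orthogonal families; an argument in the spirit of Kadison's theorem on unital order isomorphisms then upgrades $J$ to a normal Jordan $*$-monomorphism on $\mathcal{A}$, the \emph{Jordan} (rather than associative) property being exactly what a linear, non-multiplicative isometry can be expected to preserve. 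The operator $N$ emerges as the Radon--Nikodym density comparing $\tau$ with $\tau\circ J$, which yields the identity $\tau(x)=\tau(N^pJ(x))$ on ${\mathcal{A}}^{+}$ and pins down $V(x)=wNJ(x)$ on $\mathcal{A}\cap L_p(\mathcal{A},\tau)$ after assembling the phases $w_e$ into a single partial isometry $w$ with $w^*w=J(1)$. The formula then propagates to all of $L_p(\mathcal{A},\tau)$ by density and isometry.

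The main obstacle I anticipate is twofold. First, promoting the additive, orthogonality-preserving map on the projection lattice to a genuine normal Jordan $*$-monomorphism---so that it preserves squares of self-adjoints and the involution, not merely orthogonality---is where the bulk of the operator-algebraic work lies, and it requires careful control of normality and continuity when passing from projections to arbitrary self-adjoint elements. Second, $N$ is in general unbounded and only affiliated with $\mathcal{A}\cap J(\mathcal{A})'$, so its domain, its affiliation, and the meaning of $N^p$ and of $wNJ(x)$ must be handled with care; one must verify that the trace identity is consistent and produces an exact isometry rather than merely a contraction. Uniqueness of the triple $(w,J,N)$ is comparatively routine: the relations $w^*w=J(1)=\mathrm{supp}(N)$ together with the trace identity determine $N$ on each finite-trace piece and fix $w$ on the support of $J(1)$.
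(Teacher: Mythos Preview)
The paper does not prove this theorem; it is quoted verbatim as \cite[Theorem~2]{Yea} and used as a black box in the proof of Proposition~\ref{prop:LpCp}. There is therefore no ``paper's own proof'' to compare your proposal against.

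For what it is worth, your sketch is broadly in the spirit of Yeadon's original argument: the equality case of the noncommutative Clarkson inequality for $p\neq 2$ is indeed the mechanism that forces a linear isometry to preserve support-orthogonality, and from there one builds $J$ on projections and extends. The passage from an orthoadditive map on finite-trace projections to a normal Jordan $*$-monomorphism, and the construction of the (generally unbounded) density $N$, are exactly the delicate points you flag; those are handled in \cite{Yea} rather than in the present paper.
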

Here $\mathcal{A}'$ denotes the commutant of $\mathcal{A}$, and a closed and densely defined operator $L$ is said to be \textit{affiliated with} a von Neumann algebra $\mathcal{A}_0$ (both acting on the same Hilbert space) if every unitary $u$ in $\mathcal{A}_0'$ carries the domain of $N$, onto itself and satisfies $uNu^{*}=N$ there. A \textit{Jordan $*$-monomorphism} of a von Neumann algebra $\mathcal{A}$ is an injective linear map $J\colon \mathcal{A}\to \mathcal{A}$ which satisfies for any $x\in \mathcal{A}$, $J(x^2)=J(x)^2$. 

With the aid this theorem, we claim the following:
\begin{prop}\label{prop:LpCp}
For a finite measure space $\mathcal{D}$ and $p\ne 2$ in $(1,\infty)$, let $V$ be a linear isometry on $L^p(\mathcal{D},C_p)$. Then the following composition
$$
\tilde{V}= \tilde{M}_{p,2}\circ V \circ \tilde{M}_{2,p}
$$
is a $\mathrm{linear}$ isometry map on $L^2(\mathcal{D},C_2)$.
\end{prop}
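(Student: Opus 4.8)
The plan is to run the same computation as in the proof of Proposition~\ref{prop:C_p}, with Yeadon's classification (Theorem~\ref{thm:ye}) playing the role that Arazy's theorem (Theorem~\ref{thm:Ara}) played there, and with Puschnigg's map $\tilde{M}_{p,r}$ (Theorem~\ref{thm:pusc}) replacing $M_{p,r}$. First I would record that $E=L^p(\mathcal{D},C_p)$ is exactly $L_p(\mathcal{A},\tau)$ for the semifinite von Neumann algebra $\mathcal{A}=L^{\infty}(\mathcal{D})\otimes\mathbb{B}(\mathfrak{H})$ with $\tau=\tau_{L^{\infty}}\otimes\mathrm{Tr}$, so that for $p\ne 2$ every linear isometry $V$ of $E$ admits Yeadon's description: there are a partial isometry $w\in\mathcal{A}$, a normal Jordan $*$-monomorphism $J\colon\mathcal{A}\to\mathcal{A}$, and a positive self-adjoint operator $N$ affiliated with $\mathcal{A}\cap J(\mathcal{A})'$ with $w^{*}w=J(1)$ equal to the support projection of $N$, $\tau(x)=\tau(N^pJ(x))$ for $x\in\mathcal{A}^{+}$, and $V\cdot x=wNJ(x)$ on $\mathcal{A}\cap L_p(\mathcal{A},\tau)$ (extended to $E$ by density).

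Before computing I would collect the three structural features that make the calculation go through. First, a normal Jordan $*$-monomorphism splits along a central projection $z$ into a $*$-homomorphism $zJ$ and a $*$-antihomomorphism $(1-z)J$; I will treat these two pieces exactly as the two cases of Arazy's theorem, the antihomomorphism part being the analogue of the transpose case $(2)$ there. Secondly, since $J$ preserves the adjoint and squares, it preserves the continuous functional calculus of self-adjoint elements, so $J(|x|^{\alpha})=J(|x|)^{\alpha}$ for $\alpha>0$, the projection $J(1)$ acts as the unit of $J(\mathcal{A})$, and images of partial isometries are partial isometries. Thirdly, because $N$ is affiliated with $\mathcal{A}\cap J(\mathcal{A})'$ it commutes with every element of $J(\mathcal{A})$, and $NJ(1)=N$.

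The computation then proceeds as in the $C_p$ case. Fix a unit vector $x\in L^2(\mathcal{D},C_2)=L_2(\mathcal{A},\tau)$ with polar decomposition $x=u|x|$, and put $y=\tilde{M}_{2,p}(x)=u|x|^{2/p}$. On the homomorphism part one has $J(y)=J(u)J(|x|)^{2/p}$; computing $(V\cdot y)^{*}(V\cdot y)$ and using the three features above (in particular $w^{*}w=J(1)$ and $NJ(1)=N$) gives modulus $|V\cdot y|=NJ(|x|)^{2/p}$ with partial-isometry part $wJ(u)$, so that
\begin{align*}
\tilde{V}(x)&=\tilde{M}_{p,2}(V\cdot y)=wJ(u)\bigl(NJ(|x|)^{2/p}\bigr)^{p/2}\\
&=wN^{p/2}J(u)J(|x|)=wN^{p/2}J(x).
\end{align*}
On the antihomomorphism part the same steps, together with the identity $u|x|^{\alpha}u^{*}=|x^{*}|^{\alpha}$, yield $|V\cdot y|=NJ(|x^{*}|)^{2/p}$ and partial-isometry part $wJ(u)$, and after using $|x^{*}|u=x$ one again arrives at $\tilde{V}(x)=wN^{p/2}J(x)$. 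Since the two central pieces have orthogonal supports, the general case is their sum, and in all cases $\tilde{V}(x)=wN^{p/2}J(x)$, which is manifestly linear in $x$. As $\tilde{V}$ is a self-homeomorphism of the unit sphere of $L^2(\mathcal{D},C_2)$ (being a composition of the sphere homeomorphisms $\tilde{M}_{2,p}$, $V$, and $\tilde{M}_{p,2}$) that coincides with the restriction of this linear map, the linear map sends the unit sphere into itself and is therefore a linear isometry; as a sanity check, the triple $(w,J,N^{p/2})$ is exactly of Yeadon's form for the exponent $2$, since $(N^{p/2})^2=N^p$ reproduces the trace identity.

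The main obstacle I expect is the bookkeeping around the unbounded operator $N$: one must justify the manipulations $NJ(a)=J(a)N$, $NJ(1)=N$, and the functional-calculus identity $\bigl(NJ(|x|)^{2/p}\bigr)^{p/2}=N^{p/2}J(|x|)$ at the level of affiliated operators, tracking support projections so that the claimed partial-isometry part really implements the polar decomposition. The only other delicate point is the antihomomorphism (transpose-type) case, where order-reversal forces the passage from $|x|$ to $|x^{*}|$; this is precisely the analogue of case $(2)$ in the proof of Proposition~\ref{prop:C_p} and is handled by $u|x|^{\alpha}u^{*}=|x^{*}|^{\alpha}$ and $|x^{*}|u=x$.
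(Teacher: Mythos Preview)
Your approach is the same as the paper's: Yeadon's structure theorem for $V$, the St{\o}rmer splitting of the Jordan $*$-monomorphism $J$ into a $*$-homomorphism piece and an anti-$*$-homomorphism piece along a central projection, then a polar-decomposition computation showing $\tilde{V}(x)=wN^{p/2}J(x)$ (your anti-homomorphism bookkeeping via $u|x|^{\alpha}u^{*}=|x^{*}|^{\alpha}$ is equivalent to the paper's conjugation by $J(u)$). One technical point needs tightening: you fix a general unit vector $x\in L_2(\mathcal{A},\tau)$ and then write $J(|x|)^{2/p}$, but $J$ is only defined on $\mathcal{A}$, and Yeadon's formula $V\cdot y=wNJ(y)$ is stated for $y\in\mathcal{A}\cap L_p(\mathcal{A},\tau)$; the paper handles this by first taking $x\in L_2(\mathcal{A},\tau)\cap\mathcal{A}$ (and choosing $u$ unitary), computing $\tilde{V}(x)=wN^{p/2}J(x)$ there, and then extending by density of $L_2\cap\mathcal{A}$ in $L_2$ --- you should do the same, which also sidesteps the affiliated-operator manipulations you flag as the main obstacle.
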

Note that $L^2(\mathcal{D},C_2)$ is a Hilbert space (compare with Section~\ref{sec:Cp}).

\begin{proof}(\textit{Proposition~\ref{prop:LpCp}})
For the proof, we need the following theorem of St\o mer on structures of Jordan monomorphisms (note that $e$ and $f$ below are orthogonal: $ef=fe=0$):
\begin{thm}$($St\o mer, \cite[Lemma 3.2]{St}$)$\label{thm:sto}
Let $\mathcal{A}$, $\mathcal{M}$ be von Neumann algebras and $J$ be a normal Jordan $*$-monomorphism from $\mathcal{A}$ into $\mathcal{M}$ such that the von Neumann algebra generated by $J(\mathcal{A})$ equals $\mathcal{M}$. Then there exist two central projections $e$ and $f$ in $\mathcal{M}$ with $e+f=1_{\mathcal{M}}$ such that the map $J_1\colon x \mapsto J(x)e$ is a $*$-homomorphism and that the map $J_2\colon x \mapsto J(x)f$ is an anti-$*$-homomorphism.
\end{thm}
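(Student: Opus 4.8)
The plan is to establish the classical Jacobson--Rickart--Kadison decomposition: a normal Jordan $*$-monomorphism splits as a $*$-homomorphism and a $*$-anti-homomorphism carried by complementary central projections. First I would extract the purely algebraic consequences of the defining identity $J(x^2)=J(x)^2$. Linearizing in $x$ gives the bilinear Jordan identity $J(xy+yx)=J(x)J(y)+J(y)J(x)$; substituting this into the special-Jordan-algebra formula $xyx=2(x\circ y)\circ x-x^2\circ y$, where $x\circ y=\tfrac12(xy+yx)$, yields the triple-product identity $J(xyx)=J(x)J(y)J(x)$, whose polarization $x\mapsto x+z$ gives $J(xyz+zyx)=J(x)J(y)J(z)+J(z)J(y)J(x)$. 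I would also note that $J(1)$ is a projection acting as a two-sided identity on $J(\mathcal{A})$, so that, since $\mathcal{M}$ is generated by $J(\mathcal{A})$, one automatically has $J(1)=1_{\mathcal{M}}$ and may work in the unital setting.

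Next I would introduce the two defect elements measuring failure of multiplicativity and of anti-multiplicativity,
$$
c(x,y):=J(xy)-J(x)J(y),\qquad d(x,y):=J(xy)-J(y)J(x),
$$
so that $J$ is a homomorphism precisely where all $c(x,y)$ vanish and an anti-homomorphism precisely where all $d(x,y)$ vanish; note the relations $c(x,y)+d(x,y)=J(xy-yx)$ and $c(x,y)-d(x,y)=J(y)J(x)-J(x)J(y)$. The heart of the proof is the \emph{Jacobson--Rickart annihilation identity}
$$
c(x,y)\,w\,d(u,v)=0\qquad(x,y,u,v\in\mathcal{A},\ w\in\mathcal{M}).
$$
Because multiplication by fixed elements is weak$^{*}$-continuous and, by normality of $J$ together with $\mathcal{M}=\langle J(\mathcal{A})\rangle''$, the $*$-algebra generated by $J(\mathcal{A})$ is weak$^{*}$-dense, it suffices to verify this for $w$ in that $*$-algebra. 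There one manipulates the triple-product and polarized identities above, converting products of defects into values of $J$ on associators that cancel; intermediate relations of the type $c(x,y)J(z)+J(z)d(x,y)=J(z[x,y])$ are the engine of these cancellations.

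Granting the annihilation identity, the decomposition is then formal. The weak$^{*}$-closures $\overline{I}$ and $\overline{K}$ of the two-sided ideals generated respectively by all $c(x,y)$ and all $d(u,v)$ are weak$^{*}$-closed two-sided ideals of $\mathcal{M}$, hence of the form $\overline{I}=\mathcal{M}e_I$ and $\overline{K}=\mathcal{M}e_K$ for central projections $e_I,e_K$; the identity forces $\overline{I}\,\overline{K}=0$, whence $e_Ie_K=0$. Setting $e:=1_{\mathcal{M}}-e_I$ and $f:=e_I$ gives orthogonal central projections with $e+f=1_{\mathcal{M}}$. Since $c(x,y)e=0$ and $e$ is central, $J(xy)e=J(x)J(y)e=(J(x)e)(J(y)e)$, so $x\mapsto J(x)e$ is a homomorphism; since $e_I\le 1-e_K$ gives $d(u,v)e_I=0$, the map $x\mapsto J(x)f$ is an anti-homomorphism. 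Both are $*$-preserving because $J$ is, which completes the proof.

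The step I expect to be the main obstacle is the annihilation identity $c(x,y)\,w\,d(u,v)=0$: the reduction to complementary central projections is routine von Neumann algebra bookkeeping, but verifying the underlying multilinear Jordan identities — and checking that the passage from the generating $*$-subalgebra to all of $\mathcal{M}$ is genuinely justified by normality and weak$^{*}$-density — is the delicate part, demanding careful control of the polarized triple products rather than any single clever computation.
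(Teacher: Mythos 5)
You should first note what you are comparing against: the paper does not prove this statement at all — it quotes it verbatim from St\o mer \cite{St}, so the benchmark is the classical literature proof (St\o mer's Lemma 3.2, which rests on Kadison's theorem on Jordan $*$-isomorphisms and on the Jacobson--Rickart theory), not anything in the paper. Your formal groundwork is in order: the linearized identity $J(xy+yx)=J(x)J(y)+J(y)J(x)$, the triple-product identity and its polarization, and your ``engine'' relation $c(x,y)J(z)+J(z)d(x,y)=J(z[x,y])$ is indeed a correct identity — it follows by subtracting the polarized triple identity $J(xyz+zyx)=J(x)J(y)J(z)+J(z)J(y)J(x)$ from $J((xy)z+z(xy))=J(xy)J(z)+J(z)J(xy)$. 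The endgame is also correct bookkeeping: weak$^{*}$-closed two-sided ideals of $\mathcal{M}$ are of the form $\mathcal{M}e$ with $e$ a central projection, the relation $IK=0$ survives the weak$^{*}$ closures because multiplication is separately weak$^{*}$-continuous, and your deduction of the homomorphism and anti-homomorphism properties from $c(x,y)e=0$ and $d(u,v)e_I=0$ is fine, as is $J(1)=1_{\mathcal{M}}$. One inaccuracy en route: the weak$^{*}$-density of the $*$-algebra generated by $J(\mathcal{A})$ in $\mathcal{M}$ is the double commutant theorem and has nothing to do with normality of $J$; your appeal to normality there is misplaced.

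The genuine gap is exactly the step you flag: the mixed-pair annihilation identity $c(x,y)\,w\,d(u,v)=0$ is asserted, not proved, and the method you propose — ``manipulating the triple-product and polarized identities'' — cannot deliver it. Everything obtainable by polarizing $J(x^2)=J(x)^2$ is a multilinear identity, and linearizing the same-pair relation $c(x,y)\,w\,d(x,y)=0$ in $(x,y)$ produces only symmetrized sums of cross terms such as $c(x,y)\,w\,d(u,v)+c(u,v)\,w\,d(x,y)+\cdots=0$, never the vanishing of an individual mixed term; moreover even the same-pair relation with an arbitrary word $w$ in the middle (as opposed to $w=J(z)$) is itself a nontrivial lemma, not a one-line cancellation. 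Every known proof injects non-formal input at precisely this point: the Herstein--Bre\v{s}ar route uses semiprimeness — which you do have here, since for $a$ in a $*$-subalgebra of $\mathcal{M}$, $a\,w\,a=0$ for all $w$ gives $(aa^{*})^{2}=0$ and hence $a=0$ by the C$^{*}$-identity — while St\o mer's own argument avoids defect ideals entirely, reducing via central projections and matrix units to the Jacobson--Rickart decomposition theorem for matrix rings and to Kadison's theorem. So your architecture is completable (it is essentially the later semiprime-ring treatment, arguably cleaner and more general than St\o mer's original route), but as written the crux of the proof is a hole: you must either prove the same-pair identity and then pass to mixed pairs by a semiprimeness argument, or fall back on the matrix-unit reduction; no amount of purely formal Jordan-identity manipulation will close it.
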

We apply Theorem~\ref{thm:ye} and Theorem~\ref{thm:sto} to our case: $\mathcal{A}=L^{\infty}(\mathcal{D})\otimes \mathbb{B}(\mathfrak{H})$ with the canonical trace $\tau$. Thus for $V$, we obtain $w$, $J$, and $N$ in the statement of Theorem~\ref{thm:ye} and let $q$ be the projection $J(1_{\mathcal{A}})\in \mathcal{A}$. Set $\mathcal{M}$ be the von Neumann algebra generated by $J(\mathcal{A})$ with putting $1_{\mathcal{M}}=q$, and through Theorem~\ref{thm:sto} we obtain $e,f$ central in $\mathcal{M}$. Take an arbitrary element $x$ in $L_{2}(\mathcal{A},\tau) \cap \mathcal{A}$, namely, $x$ is an element in $L^2(\mathcal{D}, C_2)$ which satisfies $
\mathrm{ess.\ sup}_{d\in \mathcal{D}} \|x(d)\| <\infty$. 
Let $x=u|x|$ be a polar decomposition of $x$ with $u \in \mathcal{A}$ being a unitary. Then 
\begin{align*}
(V\circ \tilde{M}_{2,p}) (x)= V\cdot (u|x|^{2/p})&=wNJ(u|x|^{2/p})e+wNJ(u|x|^{2/p})f \\
&= wNJ(u)J(|x|)^{2/p}e+wNJ(|x|)^{2/p}J(u)f.
\end{align*}
Hence the following is a polar decomposition of $(V\circ \tilde{M}_{2,p}) (x) $ with $wJ(u)$ being a partial isometry:
\begin{align*}
(V\circ \tilde{M}_{2,p}) (x)&=(wJ(u)e+wJ(u)f)(NJ(|x|)^{2/p}e+J(u)^{*}NJ(|x|)^{2/p}J(u)f) \\
&=(wJ(u))(NJ(|x|)^{2/p}e+J(u)^{*}NJ(|x|)^{2/p}J(u)f)
\end{align*}
Here we use the facts that $J(u)$ is a partial isometry with $J(u)^*J(u)=J(u)J(u)^*=q$; $e,f$ are projections with $e+f=q$ which are central in $\mathcal{M}$; and that $N$ is affiliated with $\mathcal{A}\cap \mathcal{M}'=\mathcal{A}\cap J(\mathcal{A})'$. Therefore, one has the following:
\begin{align*}
(\tilde{M}_{p,2}\circ V\circ \tilde{M}_{2,p}) (x)&=(wJ(u))(NJ(|x|)^{2/p}e+J(u)^*NJ(|x|)^{2/p}J(u)f)^{p/2} \\
&= (wJ(u)e+wJ(u)f)(N^{p/2}J(|x|)e+J(u)^{*}N^{p/2}J(|x|)J(u)f) \\
&= w N^{p/2}J(u)J(|x|)e + w N^{p/2}J(|x|)J(u)f \\
&=w N^{p/2}J(u|x|)e + w N^{p/2}J(u|x|)f = w N^{p/2}J(u|x|) \\
&=w N^{p/2}J(x).
\end{align*}
This means that $\tilde{M}_{p,2}\circ V\circ \tilde{M}_{2,p}$ is a linear $2$-isomery at least from $L_{2}(\mathcal{A},\tau) \cap \mathcal{A}$ to $L_{2}(\mathcal{A},\tau)$. Since in our case  $L_{2}(\mathcal{A},\tau) \cap \mathcal{A}$ is ($2$-)dense in $L_{2}(\mathcal{A},\tau)$, this map extends to a linear isometry on $L_{2}(\mathcal{A},\tau)$, as desired. 
\end{proof}
Theorem~\ref{thm:pusc} together with Proposition~\ref{prop:LpCp} lead us to the following corollary (compare with Section~\ref{sec:Cp}):
\begin{cor}\label{cor:relLp}
Let $\mathcal{D}$ be a finite measure space and $p\ne 2$ in $(1,\infty)$. Then among locally compact and second countable groups, relative property $(\mathrm{T})$ implies relative property $(\mathrm{T}_{L^p(\mathcal{D},C_p)})$.
\end{cor}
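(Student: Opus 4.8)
The plan is to run, \emph{mutatis mutandis}, the linearization argument of Bader--Furman--Gelander--Monod recalled in Section~\ref{sec:Cp} and carried out for $C_p$ in the proof of Proposition~\ref{prop:C_p}, now replacing the two tools used there by their analogues over the semifinite von Neumann algebra $L^{\infty}(\mathcal{D})\otimes\mathbb{B}(\mathfrak{H})$: Tool~$1$ is furnished by the noncommutative Mazur map $\tilde{M}_{p,r}$ of Theorem~\ref{thm:pusc}, and Tool~$2$ by the linearization of isometries in Proposition~\ref{prop:LpCp}. The transported problem lives on $L^2(\mathcal{D},C_2)$, which is a genuine Hilbert space (the $L^2$ of the trace $\tau$), so there it becomes exactly the classical relative property $(\mathrm{T})$. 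The excluded value $p=2$ is harmless: there $L^2(\mathcal{D},C_2)$ is already the ambient space and the statement is relative property $(\mathrm{T})$ itself.

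Concretely, I would argue by contradiction. Let $\Lambda\trianglerighteq\Lambda_0$ be a pair failing relative property $(\mathrm{T}_{L^p(\mathcal{D},C_p)})$; by definition there is a continuous isometric representation $\rho$ of $\Lambda$ on $B:=L^p(\mathcal{D},C_p)$ whose restriction $\rho'$ to $B'_{\rho(\Lambda_0)}$ satisfies $\rho'\succ 1_{\Lambda}$. Put $\mathfrak{H}:=L^2(\mathcal{D},C_2)$ and define
$$
\pi(g):=\tilde{M}_{p,2}\circ\rho(g)\circ\tilde{M}_{2,p}\qquad(g\in\Lambda).
$$
Each $\rho(g)$ is a surjective linear isometry of $B$, so Proposition~\ref{prop:LpCp} shows each $\pi(g)$ is a surjective linear isometry of $\mathfrak{H}$, hence a unitary. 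The homomorphism property follows from $\tilde{M}_{2,p}\circ\tilde{M}_{p,2}=\mathrm{id}$ (the Mazur maps are mutually inverse on all of the space, by $\tilde{M}_{r,p}(u|a|^{p/r})=u|a|=a$), and continuity of $\pi$ follows from continuity of $\rho$ together with the uniform continuity in Theorem~\ref{thm:pusc}. Thus $\pi$ is a continuous unitary representation of $\Lambda$ on $\mathfrak{H}$.

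It then remains to transfer almost invariance to the orthogonal complement part of $\pi$. The conjugation relation $\tilde{M}_{2,p}\circ\pi(g)=\rho(g)\circ\tilde{M}_{2,p}$ shows $\eta$ is $\pi(\Lambda_0)$-fixed iff $\tilde{M}_{2,p}\eta$ is $\rho(\Lambda_0)$-fixed, so $\tilde{M}_{p,2}$ carries $S(B^{\rho(\Lambda_0)})$ onto $S(\mathfrak{H}^{\pi(\Lambda_0)})$; and since $\Lambda_0$ is normal, both decompositions $B=B^{\rho(\Lambda_0)}\oplus B'_{\rho(\Lambda_0)}$ and $\mathfrak{H}=\mathfrak{H}^{\pi(\Lambda_0)}\oplus(\mathfrak{H}^{\pi(\Lambda_0)})^{\perp}$ are $\Lambda$-invariant. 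Taking unit vectors $\xi_n\in B'_{\rho(\Lambda_0)}$ witnessing $\rho'\succ 1_{\Lambda}$ and setting $\eta_n:=\tilde{M}_{p,2}\xi_n$, uniform continuity together with $\pi(g)\eta_n=\tilde{M}_{p,2}(\rho(g)\xi_n)$ gives $\max_{s\in K}\|\pi(s)\eta_n-\eta_n\|\to0$; writing $\eta_n=\eta_n^{0}+\eta_n^{1}$ along the invariant decomposition, invariance of the summands yields $\|\pi(s)\eta_n^{1}-\eta_n^{1}\|\to0$, so normalizing $\eta_n^{1}$ would produce almost invariant vectors for $\pi':=\pi|_{(\mathfrak{H}^{\pi(\Lambda_0)})^{\perp}}$, i.e. $\pi'\succ 1_{\Lambda}$, contradicting relative property $(\mathrm{T})$ in its Hilbert formulation.

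The one genuine obstacle is the lower bound $\inf_n\|\eta_n^{1}\|>0$, i.e. ensuring the Mazur images of the complement vectors do not sink asymptotically into $\mathfrak{H}^{\pi(\Lambda_0)}$. I would settle it by contradiction: if $\|\eta_n^{1}\|\to0$ then $\eta_n$ approaches $S(\mathfrak{H}^{\pi(\Lambda_0)})$, whence $\xi_n=\tilde{M}_{2,p}\eta_n$ approaches $\tilde{M}_{2,p}(S(\mathfrak{H}^{\pi(\Lambda_0)}))=S(B^{\rho(\Lambda_0)})$ by uniform continuity; but a unit vector of $B'_{\rho(\Lambda_0)}$ stays at distance at least $1/\|P\|$ from $S(B^{\rho(\Lambda_0)})$, where $P$ is the bounded projection onto $B^{\rho(\Lambda_0)}$ from the uniformly smooth direct-sum decomposition, a contradiction. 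Everything else is bookkeeping; the serious analytic input, the $L^p(\mathcal{D},C_p)$-versions of the Mazur map and of the Banach--Lamperti/Arazy linearization, has already been supplied by Theorem~\ref{thm:pusc} and Proposition~\ref{prop:LpCp}.
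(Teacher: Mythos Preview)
Your proposal is correct and follows exactly the route the paper indicates: the paper simply says that Theorem~\ref{thm:pusc} (the noncommutative Mazur map on $L^p(\mathcal{D},C_p)$) together with Proposition~\ref{prop:LpCp} (linearity of the conjugated isometry) yield the corollary by comparison with the argument of Section~\ref{sec:Cp}. You have carried out that comparison in full, and in fact supplied more detail than the paper does on the passage from $\rho'\succ 1_\Lambda$ to $\pi'\succ 1_\Lambda$ (the lower bound on $\|\eta_n^1\|$ via the bounded projection), which the paper leaves as ``not difficult to see''.
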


\begin{proof}(\textit{Theorem~\ref{thm:Cp}} for finite index subgroups; and \textit{Theorem~\ref{thm:highrank}})

Frist we prove Theorem~\ref{thm:highrank}. Recall the argument in \cite[\S 5]{BFGM} of deducing $(\mathrm{F}_{\mathcal{L}^p})$ from (strong) relative $(\mathrm{T}_{\mathcal{L}^p})$ for higher rank algebraic groups (they utilizes the generalized Howe--Moore property, see \cite[\S 9]{BFGM}). Together with Proposition~\ref{prop:C_p} and Corollary~\ref{cor:relLp}, this argument implies $(\mathrm{F}_{\mathcal{C}_p})$ and $(\mathrm{F}_{L^p(\mathcal{D},C_p)})$ for higher rank algebraic groups. By $p$-induction process (it is possible, see $(2)$ of Remark~\ref{rem:int}), the later property implies $(\mathrm{F}_{\mathcal{C}_p})$ for lattices therein, see \cite[\S 8]{BFGM}.

Finally, we end the proof of Theorem~\ref{thm:Cp} by dealing with finite index subgroups of universal lattices. Let $G=\mathrm{SL}_{n\geq 4}(A)$ be the universal lattice and $\Gamma \leqslant G$ be a finite index subgroup. Suppose that $\Gamma$ does not have $(\mathrm{FF}_{\mathcal{C}_p})/\mathrm{T}$. Then there exists an isometric $\Gamma$-representation $\sigma$ on $C_p$ and a  quasi-$\sigma$-cocycle $b$ such that $b'\colon \Gamma \to (C_p)_{\sigma (\Gamma)}'$ is unbounded. Take a $p$-induction of $b$ (it is possible in view of Remark~\ref{rem:int} $(1)$), and get the induced representation $\rho=\mathrm{Ind}_{\Gamma}^G\sigma $ and the induced quasi-$\rho$-cocycle $c=\tilde{b}$ in $E={\ell}^p(\mathcal{D},C_p)$. 

Now observe that Theorem~\ref{thm:criter} and Corollary~\ref{cor:relLp} prove that $G$ has $(\mathrm{FF}_E)/\mathrm{T}$. This in particular implies the map
 $
c'\colon G \to E'_{\rho(G)}
$
 must be bounded. On the other hand, by the construction of $c$, the restriction $c'\mid_{\Gamma}$ can be identified with the quasi-$\sigma'$-cocycle $b'\colon \Gamma \to (C_{p})_{\sigma (\Gamma)}'$ and hence is unbounded. Since the measure space $\mathcal{D}$ consists of atoms, this forces $c'$ to be unbounded. It is a contradiction. Therefore $\Gamma$ must have $(\mathrm{FF}_{\mathcal{C}_p})/\mathrm{T}$, and also have $(\mathrm{F}_{\mathcal{C}_p})$ because $\Gamma$ has finite abelianization (for instance, this follows from Theorem~\ref{thm:SV}).
\end{proof}

\begin{rem}
Induction of quasi-cocycles from a lattice to a locally compact (topological) group is delicate in general. This is because for non-atomic finite measure space, the unboundedness of some value does not necessarily imply the unboundedness of $L^p$-norm. However, Burger and Monod have overcame this difficulity by showing the induced map $
H^2_{\mathrm{b}}(\Gamma;B,\sigma) \to H^2_{\mathrm{cb}}(G;L^p(\mathcal{D},B),\mathrm{Ind}_{\Gamma}^G\sigma)
$ 
is injective (here $H^{\bullet}_{\mathrm{cb}}$ denotes the continuous bounded cohomology). For details, see \cite{BM1}, \cite{BM2}.
\end{rem}

\section{\textbf{Concluding remarks}}\label{sec:concl}
\begin{rem}\label{rem:exa}
We will explain an example of a family $\mathcal{B}$ of Banach spaces which satisfies condition $(i)$ but fails to fulfill condition $(ii)$ in Theorem~\ref{thm:criter}. This example is employed for establishing property $(\mathrm{F}_{[\mathcal{H}]})$ for universal lattices in \cite{Mim}, where $[\mathcal{H}]$ denotes the family of Banach spaces $Y$ which is isomorphic to a Hilbert space; namely, on which there exists a Hilbert norm $\| \cdot \|_{\mathrm{Hilb}}$ such that there exists $C\geq 1$ such that $C^{-1}\| \cdot \|_{\mathrm{Hilb}}$$\leq \| \cdot \|_{Y}$$\leq C\| \cdot \|_{\mathrm{Hilb}}$ (the infimum of such $C$ is called the \textit{norm ratio}). The family $[\mathcal{H}]$ is not stable, but for any $M\geq 1$, the family $[\mathcal{H}]_M$ of any $Y$ isomorphic to a Hilbert space with norm ratio $\leq M$, is stable under ultraproducts. 
  Theorem 1.3 in \cite{Mim} states that $\mathrm{E}_2(A) \ltimes A^2 $$\trianglerighteq A^2$ has relative property $(\mathrm{T}_{[\mathcal{H}]})$, and the family $[\mathcal{H}]_M$ satisfies condition $(i)$ of Theorem~\ref{thm:criter}. Therefore $\mathrm{SL}_{n\geq 4}(A)$ has property $(\mathrm{F}_{[\mathcal{H}]_M})$ for any $M$. Thus property $(\mathrm{F}_{[\mathcal{H}]})$ for $\mathrm{SL}_{n\geq 4}(A)$ is also verified. 
 
 Note that, by considering $\|\cdot \|_{\mathrm{Hilb}}$, one can interpret property $(\mathrm{F}_{[\mathcal{H}]})$ as the fixed point property with respect to all affine \textit{uniformly bi-Lipschitz} action on Hilbert spaces (similarly, property $(\mathrm{T}_{[\mathcal{H}]})$ is interpreted as a property in terms of \textit{uniformly bounded} linear representations). We note that in \cite{BFGM}, property $(\mathrm{F}_{[\mathcal{H}]})$ and $(\mathrm{T}_{[\mathcal{H}]})$ are respectively called property $(\overline{F}_{\mathcal{H}})$ and $(\overline{T}_{\mathcal{H}})$. 
 
We also mention that $\mathcal{B}=[\mathcal{H}]$ and $\mathcal{B}=[\mathcal{H}]_M$ for sufficiently large $M$ do \textit{not} satisfy condition $(ii)$ in Theorem~\ref{thm:criter}. This is due to an unpublished result of Shalom that $\mathrm{Sp}_{n,1}$ and lattices therein fails to have property $(\mathrm{T}_{[\mathcal{H}]})$. 
\end{rem}

\begin{rem}\label{rem:typeII}
In the view of Theorem~\ref{thm:ye},  our fixed point theorems (and $(\mathrm{FF})/\mathrm{T}$-type property) can be extended, with some effort, to the cases of noncommutative $L^p$-spaces associated with semifinite von Neumann algebras. For uniform continuity of the associated Mazur maps on unit spehres, one can utilize an inequality of Kosaki \cite[Proposition 7]{Kos}. Compare with the proof of \cite[Corollary 5.6]{Pu}.

We mention that even from type $III$ von Neumann algebras (, namely, beyond semifinite cases), one can construct the associated noncommutative $L^p$-spaces. For details, see \cite{PX}. For classification of linear isometries on a general noncommutative $L^p$-space, see \cite{JRS} and \cite{She}.

Finally, we warn that the class $\mathcal{C}_p$ is \textit{not} stable under ultraproducts. To have stability, we need to consider noncommutative $L^p$-spaces associated with type $III$ von Neumann algebras as well. For details, see \cite{Ray}. 
\end{rem}

\begin{rem}
It is a problem of high interest to determine whether higher rank lattices and universal lattices have property $(\mathrm{F}_{\mathcal{B}_{\mathrm{uc}}})$ for $\mathcal{B}_{\mathrm{uc}}$ being the family of all uniformly convex Banach spaces. One of the main motivations for studying this problem is this relates to uniform (non-)embeddability of expander graphs, and that relates the coarse geometric Novikov conjecture \cite{KY} and (a possible direction to construct a counterexample of the surjectivity-side of) the Baum--Connes conjecture \cite{HLS}. There is a breakthrough by V. Lafforgue \cite{Laf1}, \cite{Laf2}, and his results imply $\mathrm{SL}_{n\geq 3}(F)$ ($F$ is a non-archimedean local field) and cocompact lattices therein have $(\mathrm{F}_{\mathcal{B}_{\mathrm{uc}}})$. For archimedean local field cases or noncocompact lattice cases, it does not seem any result is known for this problem.

In \cite{BFGM}, Bader--Furman--Gelander--Monod observed that for higher rank groups and lattices, in order to verify property $(\mathrm{F}_{\mathcal{B}_{\mathrm{uc}}})$ it suffices to show that $\mathrm{SL}_2(\mathbb{R}) \ltimes \mathbb{R}^2 $$\trianglerighteq \mathbb{R}^2$ and symplectic version of this pair have relative property $(\mathrm{T}_{\mathcal{B}_{\mathrm{uc}}})$. Thanks to Theorem~\ref{thm:criter}, we have an analogue of this observation for universal lattices; namely, the following:``\textit{if }$\mathrm{E}_2(A) \ltimes A^2 $$\trianglerighteq A^2$\textit{ has relative property }$(\mathrm{T}_{\mathcal{B}_{\mathrm{uc}}})$\textit{, then }$\mathrm{SL}_{n\geq 4}(A)$\textit{ has property }$(\mathrm{F}_{\mathcal{B}_{\mathrm{uc}}})$\textit{.}" The key fact is that the family of uniformly convex Banach spaces \textit{with uniform lower bounds for modulus of convexity} is  stable under ultraproducts \cite{AK}, and hence a similar argument to one in Remark~\ref{rem:exa} applies. 
\end{rem}

\section*{acknowledgments}
The author thanks his supervisor Narutaka Ozawa, and Nicolas Monod for comments and conversations. He also thanks Takeshi Katsura and Hiroki Sako, who first raised a question on $p$-Schatten classes to the author.  Finally, this work was carried out during a long stay (from February, 2010 to January, 2011) of the author at EPFL (\'{E}cole Polytechnique F\'{e}d\'{e}rale de Lausanne), on the Excellent Young Researcher Overseas Visiting Program by the Japan Society for the Promotion of Science. The author is grateful to Professor Nicolas Monod and his secretary Mrs.\ Marcia Gouffon for their warmhearted hospitality to his stay at EPFL.

\end{document}